\tikzset{dynkdot/.style={circle,draw,scale=.38}}
\newcommand{\arxiv}[1]{\href{http://arxiv.org/abs/#1}{\texttt{arXiv:#1}}}
\newcommand{\nc}{\newcommand}
\numberwithin{equation}{section}
\newenvironment{blue}{\relax\color{blue}}{\hspace*{.5ex}\relax}
\newenvironment{purple}{\relax\color{blue}}{\hspace*{.5ex}\relax}
\newenvironment{magenta}{\relax\color{magenta}}{\hspace*{.5ex}\relax}
\newenvironment{jaune}{\relax\marginnote{\ber \scalebox{.6}{\sc{To be deleted}} \er}
  \color{Orchid}}{\hspace*{.5ex}\relax}
\newcommand{\bep}{\begin{purple}}
\newcommand{\eep}{\end{purple}}
\newcommand{\bj}{\begin{jaune}}
\newcommand{\ej}{\end{jaune}}
\newcommand{\beb}{\begin{blue}}
\newcommand{\eb}{\end{blue}}
\newcommand{\bgr}{\begin{jaune}}
\newcommand{\egr}{\end{jaune}}
\newcommand{\bem}{\begin{magenta}}
\newcommand{\eem}{\end{magenta}}
\nc{\eq}{\begin{myequation}} \nc{\eneq}{\end{myequation}}
\nc{\eqn}{\begin{myequationn}} \nc{\eneqn}{\end{myequationn}}
\newcommand{\berm}[1]{\begin{red}{}\marginnote{\fbox{\scshape\lowercase{M}}}%
#1}  % Masaki
\newcommand{\bero}[1]{\begin{red}{}\marginnote{\fbox{\scshape\lowercase{O}}}%
#1}
\newcommand{\berE}[1]{\begin{red}{}\marginnote{\fbox{\scshape\lowercase{E}}}%
#1}
\newcommand{\berMH}[1]{\begin{red}{}\marginnote{\fbox{\scshape\lowercase{MH}}}%
#1}  % Myungho
\nc{\cmtm}[1]{\color{Bittersweet}\marginnote{\fbox{M}}{#1}}
\nc{\hs}{\hspace*}
\nc{\ms}{\mspace}
\nc{\st}[1]{\{{#1}\}}
\nc{\qR}[1]{\ttq_{\mspace{-2mu}\raisebox{-.8ex}{${\scriptstyle{#1}}$}}}
\theoremstyle{plain}
\newtheorem{lemma}{Lemma}[section]
\newtheorem{proposition}[lemma]{Proposition}
\newtheorem{theorem}[lemma]{Theorem}
\newtheorem{corollary}[lemma]{Corollary}
\newtheorem{conjecture}[lemma]{Conjecture}
\theoremstyle{definition}
\newtheorem{remark}[lemma]{Remark}
\newtheorem{definition}[lemma]{Definition}
\nc{\Def}{\begin{definition}}
\nc{\edf}{\end{definition}}
\nc{\mTh}{\begin{mtheorem}}
\nc{\enmth}{\end{mtheorem}}
\nc{\Th}{\begin{theorem}}
\nc{\enth}{\end{theorem}}
\nc{\Prop}{\begin{proposition}}
\nc{\enprop}{\end{proposition}}
\nc{\Lemma}{\begin{lemma}}
\nc{\enlemma}{\end{lemma}}
\nc{\Cor}{\begin{corollary}}
\nc{\encor}{\end{corollary}}
\nc{\Rem}{\begin{remark}}
\nc{\enrem}{\end{remark}}
\nc{\Conj}{\begin{conjecture}}
\nc{\enconj}{\end{conjecture}}
\nc{\ledot}{\mathrel{\le\ms{-11mu}\raisebox{.2ex}{$\cdot$}}}
\nc{\predot}{\mathrel{\preceq\ms{-9mu}\raisebox{.35ex}{$\centerdot$}}}
\renewcommand{\le}{\leqslant}
\renewcommand{\ge}{\geqslant}
\renewcommand{\preceq}{\preccurlyeq}
\newcommand{\seteq}{\mathbin{:=}}
\newcommand{\soplus}{\mathop{\mbox{\normalsize$\bigoplus$}}\limits}
\newcommand{\tens}{\mathop\otimes}
\newcommand{\g}{\mathfrak{g}}
\newcommand{\n}{\mathfrak{n}}
\newcommand{\Q}{\mathbb{Q}}
\newcommand{\Z}{\mathbb{Z}\ms{1mu}}
\newcommand{\al}{{\ms{1mu}\alpha}}
\newcommand{\ep}{\epsilon}
\newcommand{\la}{\lambda}
\newcommand{\be}{{\ms{1mu}\beta}}
\newcommand{\La}{\Lambda}
\newcommand{\wt}{{\rm wt}}
\newcommand{\ch}{{\rm ch}}
\newcommand{\im}{\imath}
\newcommand{\jm}{\jmath}
\newcommand{\ocalD}{\overline{\calD}}
\newcommand{\uw}{\underline{w}}
\newcommand{\sfC}{\mathsf{C}}
\newcommand{\sfD}{\mathsf{D}}
\newcommand{\sfP}{\mathsf{P}}
\newcommand{\sfQ}{\mathsf{Q}}
\newcommand{\sfS}{\mathsf{S}}
\newcommand{\sfW}{\mathsf{W}}
\newcommand{\sfc}{\mathsf{c}}
\newcommand{\bbD}{\mathbb{D}}
\newcommand{\bfM}{\mathbf{M}}
\newcommand{\calA}{\mathcal{A}}
\newcommand{\calD}{\mathcal{D}}
\newcommand{\calE}{\mathcal{E}}
\newcommand{\calF}{\mathcal{F}}
\newcommand{\calQ}{\mathcal{Q}}
\newcommand{\calU}{\mathcal{U}}
\newcommand{\hcalA}{\widehat{\calA}}
\newcommand{\scrC}{\mathscr{C}}
\newcommand{\ttB}{\mathtt{B}}
\newcommand{\ttb}{\mathtt{b}}
\newcommand{\ttq}{\mathtt{q}}
\nc{\bg}{\sigma}%braid generator
\newcommand{\ttx}{\mathtt{x}}
\newcommand{\tty}{\mathtt{y}}
\newcommand{\ttz}{\mathtt{z}}
\newcommand{\rmE}{\mathrm{E}}
\newlength{\mylength}
\newcommand*{\para}{%
  \rlap{\rotatebox{-30}{\rule[.05ex]{.4pt}{.77em}}}%
  \kern.04em%
  \rlap{\kern.36em\raisebox{0.649519052835em}{\rule{.6em}{.4pt}}}%
  \rule{.6em}{.4pt}\kern-.04em%
  \rotatebox{-30}{\rule[.05ex]{.4pt}{.77em}}}
\newcommand{\isoto}[1][]{\mathop{\xrightarrow%
[{\raisebox{.3ex}[0ex][.3ex]{$\scriptstyle{#1}$}}]%
{{\raisebox{-.6ex}[0ex][-.6ex]{$\mspace{2mu}\sim\mspace{2mu}$}}}}}
\newcommand{\lan}{\langle}
\newcommand{\ran}{\rangle}
\newcommand{\qt}[1]{\quad\text{#1}}
\newcommand{\qtq}[1][{and}]{\quad\text{{#1}}\quad}
\newcommand{\ee}{\end{enumerate}}
\newcommand{\bitem}{\begin{itemize}}
\newcommand{\eitem}{\end{itemize}}
\newcommand{\ben}{\begin{enumerate}[{\rm (1)}]}
\newcommand{\bnum}{\begin{enumerate}[{\rm (i)}]}
\newcommand{\bnump}{\begin{enumerate}[{\rm (i)$'$}]}
\newcommand{\bna}{\begin{enumerate}[{\rm (a)}]}
\newcommand{\bnA}{\begin{enumerate}[{\rm (A)}]}
\newcommand{\bc}{\begin{cases}}
\newcommand{\ec}{\end{cases}}
\newcommand{\ba}{\begin{array}}
\newcommand{\ea}{\end{array}}
\newcommand{\snoi}{\smallskip \noindent}
\newenvironment{myequation}
{\relax\setlength{\arraycolsep}{1pt}\begin{eqnarray}}
{\end{eqnarray}}
\newenvironment{myequationn}
{\relax\setlength{\arraycolsep}{1pt}\begin{eqnarray*}}
{\end{eqnarray*}}
\nc{\eqs}[1]{\underset{\raisebox{.4ex}[.7ex][0ex]{$\scriptstyle{#1}$}}{=}}
\newcommand{\Dcan}{{\bbD_\can}}
\newcommand{\Ang}[1]{  \bigl\lan #1 \bigr\ran  }
\newcommand{\Es}{\rmE^\star}
\newcommand{\qintc}[3]{  \left[ \begin{matrix} #1 \\ #2 \end{matrix} \right]_{#3}  }
\newcommand{\longepito}[1][]{\xymatrix@C=4ex{{}\ar@{->>}[r]^{#1}&{}}}
\newcommand{\hAform}[1]{\bl #1\br_{\hcalA}\ms{1mu}}
\newcommand{\bl}{\bigl(}
\newcommand{\br}{\bigr)}
\newcommand{\oprod}{\prod^{\xrightarrow{}}}
\newcommand{\uii}{{\underline{\boldsymbol{\im}}}}
\nc{\ake}[1][2ex]{\rule[-.5ex]{0ex}{#1}}
\newcommand{\ujj}{{\underline{\boldsymbol{\jm}}}}
\newcommand{\TT}{ \textbf{\textit{T}}}
\nc{\col}{\colon}
\nc{\ord}{\mathrm{ord}}
\nc{\catQ}{\scrC_\calQ}
\nc{\catD}{\scrC_\bbD}
\nc{\vs}{\vspace*}
\nc{\D}{\mathscr{D}}
\nc{\Proof}{\begin{proof}}
  \nc{\QED}{\end{proof}}
\nc{\tLa}{\widetilde{\Lambda}}
\nc{\ro}{{\rm(}}
\nc{\rf}{{\rm)}\xspace}
\nc{\Aut}{\mathrm{Aut}}
\nc{\can}{\mathrm{can}}
\nc{\Dc}{{\bbD_{\can}}}
\nc{\Cgo}{\scrC_{\mathfrak{g}}^{0}}
\nc{\bb}{\mathtt{b}}
\nc{\catC}{\scrC}
\nc{\braid}{\ttB}
\nc{\Ass}[1][\bbD]{\calE_{#1}}
\nc{\Ci}{C^\uii}
\nc{\Cj}{C^\ujj}
\nc{\condi}[1][K]{with $#1\cap\st{0,1}\not=\emptyset$\xspace}
\nc{\Di}[1][{\uii}]{{\bbD,{#1}}}
\nc{\Vi}[1][\uii]{V^{#1}}
\nc{\Pii}[1][\uii]{P^{#1}}
\nc{\Vdi}[1][{\Di}]{V^{#1}}
\nc{\Pdi}[1][{\Di}]{P^{#1}} 
\nc{\PBix}[1][{[l,r]}]{\Z_{\ge0}^{\oplus{#1}}}
\nc{\mcf}[1][{$[a,b]$}]{maximal commuting family of $i$-boxes in {#1}\xspace}
\nc{\nn}{\nonumber}
\nc{\bwr}{\mbox{\large$\wr$}}
\newcommand{\tch}{\widetilde{\ch}}
\nc{\tchDcan}{\ms{2mu}\tch_{\mspace{.1mu}\raisebox{-.4ex}{${\scriptstyle{\Dcan}}$}}}
\nc{\monoto}[1][]{\xymatrix@C=2ex{\ar@{>->}[r]^-{{#1}}&}\ms{-8mu}}
\title[Faithful action on bosonic extensions]{Faithful action of braid group \\ on bosonic extensions}
\author[M. Kashiwara]{Masaki Kashiwara}
\thanks{The research of M.\ Kashiwara
	was supported by Grant-in-Aid for Scientific Research (B)  23K20206,  
	Japan Society for the Promotion of Science.}
\address[M. Kashiwara]{%
Kyoto University Institute for Advanced Study, Research Institute
for Mathematical Sciences, Kyoto University, Kyoto 606-8502, Japan
}
\email[M. Kashiwara]{masaki@kurims.kyoto-u.ac.jp}
\author[M. Kim]{Myungho Kim}
\address[M. Kim]{Department of Mathematics, Kyung Hee University, Seoul 02447, Korea}
\email[M. Kim]{mkim@khu.ac.kr}
\thanks{The research of M.\ Kim was supported by the National Research Foundation of
Korea (NRF) Grant funded by the Korea government(MSIT)
(NRF-2020R1A5A1016126).}
\author[S.-j. Oh]{Se-jin Oh}
\thanks{ The research of S.-j.\ Oh was supported by the National Research Foundation of
	Korea (NRF) Grant funded by the Korea government(MSIT) (NRF-2022R1A2C1004045).}
\address[S.-j. Oh]{ Department of Mathematics, Sungkyunkwan University, Suwon, South Korea}
\email[S.-j. Oh]{sejin092@gmail.com}
\author[E. Park]{Euiyong Park}
\thanks{The research of E.\ Park was supported by the National Research Foundation of Korea (NRF) Grant funded by the Korea Government(MSIT)(RS-2023-00273425 and NRF-2020R1A5A1016126).}
\address[E. Park]{Department of Mathematics, University of Seoul, Seoul 02504, Korea}
\email[E. Park]{epark@uos.ac.kr}
\subjclass[2010]{17B37, 20F36}
\date{December 3, 2025}
\begin{document}

\begin{abstract} 
The braid group action on the bosonic extension $\hcalA$ of the quantum group $\calU_q(\g)$ has been introduced in recent works, and it can 
be regarded as a generalization of Lusztig's symmetries on $\calU_q(\mathfrak{g})$.
In this notes, we prove the faithfulness of this braid group action.
\end{abstract}

\maketitle
\tableofcontents

\section{Introduction}

The braid group symmetries, introduced by Lusztig~\cite{LusztigBook} (see also~\cite{Saito94}), provide a quantum analogue of the classical Weyl group $\sfW$ actions $\{ s_i \}_{i \in I}$ on Lie algebras, establishing a connection between root system symmetries and the structure of the quantum group $\calU_q(\g)$. More precisely, the automorphisms $\{\sfS_i\}_{i \in I}$ (see~\eqref{eq: Lusztig action}) satisfy the braid relations and send each homogeneous element $x \in \calU_q(\g)$ of weight $\beta$ to $\sfS_i(x)$ of weight $s_i(\beta)$.

Using this braid symmetry, one can construct, for each $w  \in \sfW$, the quantum coordinate subalgebra $A_q(\n(w))$ of $A_q(\n) \simeq \calU^-_q(\g)$ and its dual PBW basis $\sfP_{\uw}$ for each reduced expression $\uw$ of $w$. These subalgebras play an important role in various areas of mathematics. Nevertheless, it remains unknown whether the braid group action on $\calU_q(\g)$ via $\{\sfS_i\}_{i \in I}$ is faithful. %

The bosonic extension $\hcalA$, introduced by Hernandez and Leclerc for simply-laced finite types, is a $\Q(q^{1/2})$-algebra  generated by a $\mathbb{Z}$-indexed family of Chevalley generators $\{ f_{i,m} \}_{i\in I, m\in \mathbb{Z}}$ satisfying $q$-Serre, $q$-boson, and distant $q$-commutation relations (see~\eqref{eq: bosonic extension}). Each subalgebra $\hcalA[m]$ generated by $\{ f_{i,m} \}_{i\in I}$ is isomorphic to $U_q^-(\g)$, prompting the question of whether $\hcalA$ admits a braid symmetry.
This was confirmed in~\cite{KKOP21B,JLO2} for finite types and later extended to arbitrary symmetrizable types in~\cite{KKOP24B}
by constructing automorphisms $\{\TT_i\}_{i \in I}$ satisfying the braid relations.
Using this symmetry, the subalgebra
$\hcalA(\ttb) \seteq \TT_\ttb \hcalA_{<0} \cap \hcalA_{\ge 0}$
is defined for each element $\ttb$ in the braid monoid $\ttB^+$ of the braid group $\ttB$,
and PBW-type bases $\sfP_{\underline{\ttb}}$ of $\hcalA(\ttb)$ associated with every expression $\underline{\ttb}$ of $\ttb$ were established (see Lemma~\ref{lem:int}).

In this paper, we prove that the braid group action on $\hcalA$ via $\{\TT_i\}_{i \in I}$ is faithful in the finite type case (Theorem~\ref{thm: main}). It is well known that finite-type braid groups possess a distinguished element $\Updelta \in \ttB$, corresponding to the longest element of the Weyl group $\sfW$, whose powers generate central elements.
Every element $\ttb \in \ttB$ can be expressed as
a product of a power of $\Delta$ and prefixes of $\Updelta$,
known as the Garside normal form. In addition to these properties, we exploit the non-degenerate symmetric bilinear form $\hAform{\ ,\ }$, the adjoint pair of endomorphisms $(f_{i,m}, \rmE_{i,m})$ and PBW-bases to study the action of the braid symmetry on $\hcalA$. Using these tools, we establish the faithfulness of the braid group action, which provide a foundation for further study of the interplay between braid group and the algebraic structure of the bosonic extension.

\section{Bosonic extension}

In this preliminary section, we recall the bosonic extension of quantum groups. 
The bosonic extension $\hcalA$ (\cite{KKOP24}) can be regarded as an affinization of a half of a quantum group, and it is 
isomorphic to the quantum Grothendieck ring of Hernandez-Leclerc category over a quantum affine algebra 
of untwisted types provided that $\hcalA$ is of simply-laced finite type (\cite{HL15} see also \cite{JLO2}). Throughout this paper, 
we restrict our attention to bosonic extensions of finite type.

\medskip

\subsection{Cartan matrix and associated data}
Let $\sfC = (\sfc_{i,j})_{i,j\in I}$ be a Cartan matrix of finite type,
$\Pi = \{ \al_i \}_{i \in I}$ the set of its corresponding simple roots,
and $\Pi^\vee = \{ h_i \}_{i \in I}$ the set of simple coroots,
which satisfy $\Ang{h_i,\al_j}=\sfc_{i,j}$.  
Note that $\sfC$ is symmetrizable in the sense that there exists a diagonal matrix $\sfD = {\rm diag} (d_i \in \Z_{\ge 1})$ 
such that  $\min(d_i)_{i \in I}=1$ and $\sfD\sfC$ is symmetric. We denote by $\sfP =\bigoplus_{i \in I} \Z \La_i$ the weight lattice and by $\sfQ =\bigoplus_{i \in I} \Z \al_i$
the root lattice corresponding to $\sfC$. Here $\La_i$ represents the $i$-th fundamental weight; i.e, $\Ang{h_j,\La_i} =\delta_{j,i}$. 

Note that there exists a $\Q$-valued symmetric bilinear form $( \ , \ )$ on $\sfP$ such that $(\al_i,\al_i)=2 d_i$ and $\Ang{h_i,\la} = 2(\al_i,\la)/(\al_i,\al_i)$ for any $i\in I$ and $\la \in \sfP$. 

\subsection{Bosonic extensions}
Let $q$ be an indeterminate with the formal square root $q^{1/2}$. For each $i \in I$, we set $q_i \seteq q^{d_i}$,
$$ [n]_i \seteq \dfrac{q_i^n-q_i^{-n}}{q_i-q_i^{-1}}, \ \ [n]_i! \seteq \prod_{k=1}^n [k]_i \qtq \qintc{m}{n}{i} = \dfrac{[m]_i!}{[n]_i![m-n]_i!}$$
for $i \in I$ and $m \ge n \in \Z_{\ge0}$. 

The bosonic extension $\hcalA$ of the quantum group associated with $\sfC$ is defined as the $\Q(q^{1/2})$-algebra 
generated by the infinite family of generators $\{ f_{i,p} \}_{(i,p) \in I \times \Z }$ subject to the following relations:
\begin{equation}\label{eq: bosonic extension}
\parbox{75ex}{
\bna
\item $\displaystyle\sum_{k=0}^{1-\sfc_{i,j}} (-1)^k \qintc{1 - \sfc_{i,j}}{k}{i} f_{i,p}^kf_{j,p}f_{i,p}^{1-\sfc_{i,j}-k} =0$ for any $i \ne j \in I$ and $p \in \Z$, 
\item   $f_{i,m}f_{j,p} = q_i^{(-1)^{m+p+1} \sfc_{i,j}} f_{j,p}f_{i,m}  + \delta_{(j,p),(i,m+1)}(1-q_i^2)$ if $m <p$. 
\ee
  }
\end{equation}

With the assignment $\wt(f_{i,m})=(-1)^{m+1}\al_i$, the relations of $\hcalA$ in~\eqref{eq: bosonic extension} are homogeneous, and hence $\hcalA$
admits a $\sfQ$-weight space decomposition: 
\begin{align} \label{eq: weight decomp A}
\hcalA = \bigoplus_{\be \in \sfQ} \hcalA_\be. 
\end{align} 
We say that an element $x \in \hcalA_\be$ is homogeneous of weight $\be$ and set $\wt(x) \seteq \be$.

Note that $\hcalA$ has (i) the $\Q(q^{1/2})$-algebra automorphism $\ocalD$ defined by $\ocalD(f_{i,p})=f_{i,p+1}$ and  
(ii) the $\Q(q^{1/2})$-algebra anti-automorphism $\star$ defined by $(f_{i,p})^\star = f_{i,-p}$. 

\begin{definition}
For $-\infty \le a \le b \le \infty$, let $\hcalA[a,b]$ be the $\Q(q^{1/2})$-subalgebra of $\hcalA$ generated by $\{ f_{i,k} \ | \  i \in I, a \le k \le b \}$. We write
$$
\hcalA[m] \seteq \hcalA[m,m], \ \ \hcalA_{\ge m} \seteq \hcalA[m,\infty] \qtq \hcalA_{\le m} \seteq \hcalA[-\infty,m].
$$
Similarly $\hcalA_{> m} \seteq \hcalA_{\ge m+1}$ and $\hcalA_{< m} \seteq \hcalA_{\le m-1}$. 
\end{definition}

\begin{theorem} [{\cite[Corollary 5.4]{KKOP24}}]
  For any $m\in\Z$, the subalgebra $\hcalA[m]$ is isomorphic to the negative half $\calU_q^-(\g)$ of the quantum group $\calU_q(\g)$ associated with $\sfC$.
  Here $\calU_q(\g)$ \ro resp.\ $\calU_q^-(\g)$ \rf
  denotes the $\Q(q^{1/2})$-algebra generated by the Chevalley generators $e_i, f_i$ $(i \in I)$, and $t_i\seteq q_i^{h_i}$ \ro resp.\ $f_i$\rf.
Moreover, for any $a \le b$, the $\Q(q^{1/2})$-linear map
\begin{align} \label{eq: serial}
\hcalA[b] \otimes_{\Q(q)^{1/2}} \hcalA[b-1]  \otimes_{\Q(q)^{1/2}} \cdots \otimes_{\Q(q)^{1/2}} \hcalA[a+1]  \otimes_{\Q(q)^{1/2}}
\hcalA[a]  \to \hcalA[a,b]
\end{align}
defined by $x_b \otimes x_{b-1} \otimes \cdots \otimes x_{a+1} \otimes x_a \mapsto x_b x_{b-1} \cdots x_{a+1} x_a$ is an isomorphism. 
\end{theorem}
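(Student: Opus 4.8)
Write $\calA^-\seteq\calU_q^-(\g)$, with its standard presentation by Chevalley generators $f_i$ and the quantum Serre relations. Specializing \eqref{eq: bosonic extension} to a single time $p=m$ recovers exactly these relations — part (b) requires distinct times and imposes nothing inside $\hcalA[m]$ — so $f_i\mapsto f_{i,m}$ defines a surjective $\Q(q^{1/2})$-algebra homomorphism $\iota_m\colon\calA^-\twoheadrightarrow\hcalA[m]$. Since \eqref{eq: serial} factors as $\iota_b\otimes\dots\otimes\iota_a$ followed by the multiplication map $\mu\colon\hcalA[b]\otimes\dots\otimes\hcalA[a]\to\hcalA[a,b]$, the whole statement reduces to: each $\iota_m$ is injective, and $\mu$ is bijective.

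First, $\mu$ is surjective: for $m<p$, relation \eqref{eq: bosonic extension}(b) rewrites $f_{i,m}f_{j,p}$ as a scalar times $f_{j,p}f_{i,m}$ plus the strictly shorter scalar term $\delta_{(j,p),(i,m+1)}(1-q_i^2)$, so filtering $\hcalA[a,b]$ by word length and inducting on length and on the number of ``time descents'' of a monomial straightens every element into a $\Q(q^{1/2})$-combination of ordered products $y_by_{b-1}\cdots y_a$ with $y_k\in\hcalA[k]$. Combining this with a PBW-basis straightening inside each $\hcalA[k]$ shows that $\hcalA[a,b]$ is spanned by the $\mu$-images of a PBW basis of $\hcalA[b]\otimes\dots\otimes\hcalA[a]$. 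The content of the theorem is therefore the \emph{linear independence} of these images, equivalently the lower bound $\dim_{\Q(q^{1/2})}\hcalA[a,b]_\be\ge\sum\prod_k\dim(\calA^-)_{\be_k}$ (sum over $\be_b+\dots+\be_a=\be$); granting $\iota_m$ injective, this forces $\mu$ bijective.

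For the injectivity of $\iota_m$, and for the key two-layer case, I would identify $\hcalA[m,m+1]$ with Kashiwara's $q$-boson algebra: rescaling $f_{i,m+1}$ to $e_i'\seteq(1-q_i^2)^{-1}f_{i,m+1}$ turns \eqref{eq: bosonic extension}(b) with $p=m+1$ into $f_{i,m}e_j'-q^{(\al_i,\al_j)}e_j'f_{i,m}=\delta_{ij}$, while both $\{f_{i,m}\}$ and $\{e_i'\}$ satisfy quantum Serre, so $\hcalA[m,m+1]\cong\calB_q(\g)$. The faithful action of $\calB_q(\g)$ on $\calA^-$ (with $f_{i,m}$ acting by left multiplication) gives $\iota_m$ injective, and Kashiwara's triangular decomposition of $\calB_q(\g)$ is precisely the case $[a,b]=[m,m+1]$ of the theorem. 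For more layers I would induct on $b-a$, passing from $\hcalA[a,b-1]$ to $\hcalA[a,b]$ by realizing $\hcalA[a,b]$ on the ``bosonic Fock space'' $\calA^-\otimes\hcalA[a,b-1]$: $f_{i,b}$ acts by left multiplication on the first slot; $f_{i,b-1}$ acts on the first slot as a Kashiwara-type $q$-derivation and on the second by $\hcalA[b-1]$-multiplication; and $f_{i,k}$ for $k\le b-2$ acts on the first slot by a $q$-scalar (distant layers merely $q$-commute, since the correction term in \eqref{eq: bosonic extension}(b) vanishes for $p>m+1$) and on the second by $\hcalA[k]$-multiplication. Checking the relations \eqref{eq: bosonic extension} on this module and the linear independence of the $\mu$-images of basis tensors closes the induction.

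The main obstacle is exactly this last step — verifying that the Fock-space formulas define an $\hcalA[a,b]$-action that is faithful enough to separate the PBW monomials; equivalently, that the only relations surviving in the word-length associated graded $\mathrm{gr}\,\hcalA[a,b]$ are the layerwise quantum Serre relations and the cross-layer $q$-commutations. In Bergman-diamond-lemma terms this is the resolvability of the overlap ambiguities for the reduction system built from PBW straightening inside each $\hcalA[k]$ together with the rewriting rules of \eqref{eq: bosonic extension}(b); the only non-formal ambiguities are those in which a layerwise Serre relation overlaps a cross-layer move, and, at adjacent times, those producing the inhomogeneous $(1-q_i^2)$, and resolving them rests on the precise exponents in \eqref{eq: bosonic extension}(b) and on the fact that such ambiguities already resolve within $\calB_q(\g)$.
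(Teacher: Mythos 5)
The paper does not actually prove this statement: it is imported verbatim from \cite[Corollary 5.4]{KKOP24}, so there is no in-paper argument to compare yours against line by line. Judged on its own terms, your outline has the right skeleton, and the route through the $q$-boson algebra and an inductively built Fock-type module is essentially how the cited reference establishes the result. Your reductions are sound: relation (b) of \eqref{eq: bosonic extension} never applies within a single layer, so $\iota_m$ is a well-defined surjection from $\calU_q^-(\g)$; the straightening argument for surjectivity of the multiplication map is routine; and your computation that $\hcalA[m,m+1]$ is a quotient of a $q$-boson-type algebra after rescaling $f_{i,m+1}$ is correct, with $q_i^{\sfc_{i,j}}=q^{(\al_i,\al_j)}$ coming out as it should.

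The genuine gap is the one you flag yourself, and it is not a technicality: the entire nontrivial content of the theorem sits in the step you defer. Injectivity of $\iota_m$ and of $\mu$ amounts to exhibiting an $\hcalA[a,b]$-module on which the ordered monomials $y_b\cdots y_a$ act by linearly independent operators, and to get that module you must actually verify that your formulas on $\calU_q^-(\g)\otimes\hcalA[a,b-1]$ satisfy all of \eqref{eq: bosonic extension} --- in particular the compatibility of the layerwise Serre relations for $f_{i,b-1}$ with its action as a $q$-derivation on the new factor, and the overlaps involving three distinct times that mix the adjacent-layer relation (the source of the inhomogeneous term $(1-q_i^2)$) with a distant $q$-commutation. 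Saying these ambiguities ``already resolve within $\scrB_q(\g)$'' covers only the two-layer case; for $b-a\ge 2$ the new ones are not instances of any statement about $\scrB_q(\g)$. There is also a small circularity to watch: you cannot invoke the faithful action of $\scrB_q(\g)$ on $\calU_q^-(\g)$ to conclude $\iota_m$ is injective until you know the subalgebra $\hcalA[m,m+1]\subset\hcalA$ is all of $\scrB_q(\g)$ rather than a proper quotient, and ruling out extra relations imposed by the ambient $\hcalA$ again requires the representation of the full algebra. Until those verifications are written out (or replaced by a citation to the construction in \cite{KKOP24}), your argument yields only the upper bound $\dim\hcalA[a,b]_\be\le\sum\prod_k\dim\calU_q^-(\g)_{\be_k}$, i.e.\ surjectivity of \eqref{eq: serial}, not the isomorphism.
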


\subsection{Bilinear form and homomorphisms}
From~\eqref{eq: weight decomp A} and~\eqref{eq: serial},  $\hcalA$ admits the decomposition 
\begin{align} \label{eq: serial decom A}
  \hcalA =   \soplus_{(\be_k)_{k \in \Z} \in \sfQ^{\oplus Z}}
\oprod_{k \in \Z} \hcalA[k]_{\be_k},
\end{align}
where
$$\oprod_{k \in \Z} \hcalA[k]_{\be_k}=
\cdots \hcalA[1]_{\be_1}\hcalA[0]_{\be_0}\hcalA[-1]_{\be_{-1}}\cdots. $$

\begin{definition} [{\cite[\S 5, 6]{KKOP24}}]\hfill
\bna
\item Define $\bfM: \hcalA \longepito \Q(q^{1/2})$  to be the natural projection 
\begin{subequations} \label{eq: defn of forms on A}
\begin{align} \label{eq: M surjection}
\hcalA \longepito \oprod_{k \in \Z} \hcalA[k]_{0} \simeq \Q(q^{1/2}).    
\end{align}
\item Define a bilinear form $\hAform{ \ , \ }$ on $\hcalA$ as follows:
\begin{align} \label{eq: A form}
\hAform{x,y} \seteq \bfM(x \ocalD(y)) \in \Q(q)^{1/2} \quad \text{ for any } x,y \in \hcalA.
\end{align}
\item For homogeneous elements $x,y \in \hcalA$, we set 
\begin{align}
[x,y]_q \seteq xy - q^{-(\wt(x),\wt(y))} yx 
\end{align}
and extend this to non-homogeneous elements of $\hcalA$
via~\eqref{eq: weight decomp A}.
\item  For $(i,m) \in I \times \Z$, define endomorphisms $\rmE_{i,m}$ and $\Es_{i,m}$ of $\hcalA$ by
\begin{align} \label{eq: E endo}
\rmE_{i,m}(x) \seteq [x,f_{i,m+1}]_q \qtq  \Es_{i,m}(x) \seteq [f_{i,m-1},x]_q \ \  \text{ for } x \in \hcalA. 
\end{align}
\end{subequations}
\ee 
\end{definition}

Note that
\eq
[\hcalA_{<m},\hcalA_{>m}]_q=0\qt{for any $m\in\Z$.}\label{eq:br}
\eneq
 
\begin{theorem}[{\cite[\S 5]{KKOP24}}] \label{thm: hAform}
The bilinear form $\hAform{ \ , \ }$ is  non-degenerate and symmetric. Furthermore the form $\hAform{ \ , \ }$ satisfies the following properties:
\bna
\item $\hAform{x,y} = \hAform{\ocalD(x), \ocalD(y)}=\hAform{x^\star,y^\star}$ for any $x,y \in \hcalA$.
\item $\hAform{f_{i,m}x, y } =  \hAform{x, y f_{i,m+1} }$ and $\hAform{xf_{i,m}, y } =  \hAform{x, f_{i,m-1} y }$ for any $x,y \in \hcalA$.
\item \label{it: pairing} For any $x,y\in\hcalA_{\le m}$ and $u, v \in\hcalA_{\ge m}$, we have
$$
\hAform{f_{i,m}x,y}=\hAform{ x,\rmE_{i,m}(y)} \qtq \hAform{u , vf_{i,m} }=\hAform{\Es_{i,m}(u) ,v}.
$$
\item $\hAform{x,y}=0$ if $x,y$ are homogeneous elements such that
  $\wt(x)\not=\wt(y)$. 
\item For $x = \displaystyle\oprod_{k \in [a,b]} x_k\seteq x_bx_{b-1}\cdots x_a$ and $y = \displaystyle\oprod_{k \in [a,b]} y_k$ with $x_k,y_k \in \hcalA[k]$, we have
$$
\hAform{x,y} = q^{\sum_{s<t} (\wt(x_s),\wt(x_t))} \prod_{k\in [a,b]}
\hAform{x_k,y_k}. 
$$
\ee
\end{theorem}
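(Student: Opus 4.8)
The plan is to clear the formal assertions first, then to isolate a single ``twisted trace'' identity for the projection $\bfM$ from which symmetry and the adjunction formulas follow mechanically, then to prove the multiplicativity formula (e) by a length induction, and finally to read off non-degeneracy from (e). Since any two elements of $\hcalA$ lie in a common $\hcalA[a,b]$ and all the statements involve only finitely many slots, I may work inside a finite interval throughout. The automorphism $\ocalD$ and the anti-automorphism $\star$ permute the summands $\oprod_{k}\hcalA[k]_{\be_k}$ of the serial decomposition, fixing the one indexed by $(\be_k)\equiv0$ and restricting to the identity on it (both fix $1$), hence $\bfM\circ\ocalD=\bfM=\bfM\circ\star$. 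As $\ocalD$ negates $\sfQ$-weights, $x\,\ocalD(y)$ is homogeneous of weight $\wt(x)-\wt(y)$ for homogeneous $x,y$, and $\bfM$ kills everything of nonzero weight; this gives (d). Likewise $\hAform{\ocalD x,\ocalD y}=\bfM\bigl(\ocalD x\,\ocalD^{2}y\bigr)=\bfM\bigl(\ocalD(x\,\ocalD y)\bigr)=\hAform{x,y}$, half of (a).

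The heart of the formal part is the identity
\[
\bfM(uv)=\bfM\bigl(v\,\ocalD^{2}(u)\bigr)\qquad(u,v\in\hcalA).
\]
Expanding $u$ into generators and iterating reduces this to $u=f_{i,p}$, i.e.\ $\bfM(f_{i,p}v)=\bfM(v\,f_{i,p+2})$. For a serial monomial $v$ one moves $f_{i,p}$ to its slot: by \eqref{eq:br} it $q$-commutes, with no correction, past every slot other than $p$ and $p+1$, and interacts with the slot-$(p+1)$ factor only through the $q$-boson relation; the ``leading'' term of that relation deposits an honest $f_{i,p}$ in slot $p$, an element of nonzero weight, hence annihilated by $\bfM$, so only the contraction terms survive. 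A direct check then shows $\bfM(f_{i,p}v)$ and $\bfM(v\,f_{i,p+2})$ both vanish unless $v$ is a scalar multiple of $f_{i,p+1}$, in which case both equal that scalar times $1-q_i^2$. Granting the identity, symmetry is immediate, $\hAform{y,x}=\bfM(y\,\ocalD x)=\bfM\bigl(\ocalD x\,\ocalD^{2}y\bigr)=\bfM\bigl(\ocalD(x\,\ocalD y)\bigr)=\hAform{x,y}$; the $\star$-half of (a) follows from $\ocalD\circ\star=\star\circ\ocalD^{-1}$ (so $x^{\star}\,\ocalD(y^{\star})=\bigl((\ocalD^{-1}y)\,x\bigr)^{\star}$) together with $\bfM\circ\star=\bfM$; and (b) follows since $\hAform{f_{i,m}x,y}=\bfM\bigl(f_{i,m}\,x\,\ocalD y\bigr)=\bfM\bigl(x\,\ocalD(y)\,\ocalD^{2}(f_{i,m})\bigr)=\bfM\bigl(x\,\ocalD(yf_{i,m+1})\bigr)=\hAform{x,yf_{i,m+1}}$ (the other identity in (b) being analogous). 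For (c), by (b) it is enough to check $\hAform{x,f_{i,m+1}y}=0$ for $x,y\in\hcalA_{\le m}$ and $\hAform{u,f_{i,m-2}v}=0$ for $u,v\in\hcalA_{\ge m}$: the first equals $\bfM\bigl(x\,f_{i,m+2}\,\ocalD y\bigr)$ and \eqref{eq:br} slides $f_{i,m+2}$ to the left of $x\,\ocalD(y)\in\hcalA_{\le m+1}$, into slot $m+2$ of nonzero weight; the second, after one use of the twisted trace identity, slides $f_{i,m-1}$ to the right into slot $m-1$. Since $\rmE_{i,m}(y)=yf_{i,m+1}-q^{-(\wt(y),\wt(f_{i,m+1}))}f_{i,m+1}y$, one concludes $\hAform{x,\rmE_{i,m}(y)}=\hAform{x,yf_{i,m+1}}=\hAform{f_{i,m}x,y}$, and dually for $\Es_{i,m}$.

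For (e) I would induct on the total number of generators in $x=x_b\cdots x_a$ and $y=y_b\cdots y_a$ (the base case being trivial). If $x_b$ is a scalar then $x\in\hcalA_{\le b-1}$, and since $\ocalD(y_b)\in\hcalA[b+1]$ $q$-commutes past $x$ with no correction, $\hAform{x,y}$ is a $q$-power times $\bfM\bigl(\ocalD(y_b)\,z\bigr)$ with $z\in\hcalA_{\le b}$, which vanishes unless $y_b$ too is a scalar --- whereupon (e) drops to the interval $[a,b-1]$. Otherwise write $x_b=f_{i,b}\,x''_b$, put $x''=x''_b\,x_{b-1}\cdots x_a$, and use (b): $\hAform{x,y}=\hAform{x'',yf_{i,b+1}}$. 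Sliding $f_{i,b+1}$ leftward --- $q$-commuting past the slots below $b$ with no correction, then using the $q$-boson relation in slot $b$ --- gives, with $y'=y_{b-1}\cdots y_a$,
\[
yf_{i,b+1}=q^{-(\wt(y),\wt(f_{i,b+1}))}f_{i,b+1}\,y+q^{-(\wt(y'),\wt(f_{i,b+1}))}\,\rmE_{i,b}(y_b)\,y',
\]
where $\rmE_{i,b}(y_b)\in\hcalA[b]$; the first summand pairs to $0$ against $x''\in\hcalA[a,b]$ exactly as in (c). Hence $\hAform{x,y}=q^{-(\wt(y'),\wt(f_{i,b+1}))}\hAform{x'',\rmE_{i,b}(y_b)\,y'}$; the induction hypothesis factors the right-hand side, $\hAform{x''_b,\rmE_{i,b}(y_b)}=\hAform{f_{i,b}x''_b,y_b}=\hAform{x_b,y_b}$ by (c), and because $\wt(f_{i,b+1})=-\wt(f_{i,b})$ the accumulated $q$-powers collapse to $q^{\sum_{s<t}(\wt(x_s),\wt(x_t))}$, which is (e). Granting (e), non-degeneracy is formal: choosing a homogeneous basis of each $\hcalA[k]$, formulas (e) and (d) make the Gram matrix of $\hAform{\ ,\ }$ on $\hcalA[a,b]$, in the induced PBW-type basis, block-diagonal over weight-tuples, each block being a nonzero scalar times a tensor product of Gram matrices of $\hAform{\ ,\ }$ restricted to the $\hcalA[k]$; and the restriction of $\hAform{\ ,\ }$ to $\hcalA[k]$ --- symmetric by the above, with $\hAform{1,1}=1$ and satisfying the adjunction $\hAform{f_i x,y}=\hAform{x,\rmE_{i,k}(y)}$ of (c) --- is the standard non-degenerate symmetric form on $\calU_q^-(\g)$. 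Since every element of $\hcalA$ lies in some $\hcalA[a,b]$, $\hAform{\ ,\ }$ is non-degenerate on $\hcalA$.

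The step I expect to be the real obstacle is (e): arranging the induction so that the $q$-powers produced by the successive $q$-commutations and $q$-boson contractions assemble precisely into the closed exponent $\sum_{s<t}(\wt(x_s),\wt(x_t))$, and treating the degenerate configurations (trivial slots, vanishing $\rmE_{i,b}$-derivatives) with care. A secondary point to pin down is the identification of $\rmE_{i,k}$ restricted to $\hcalA[k]$ with the standard twisted derivation on $\calU_q^-(\g)$, which is what licenses the appeal to non-degeneracy there; this becomes routine once the $q$-boson relation is unwound.
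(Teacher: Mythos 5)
This theorem is imported verbatim from \cite[\S 5]{KKOP24}: the paper states it with a citation and gives no proof of its own, so there is no internal argument to compare your attempt against. Judged on its own terms, your reconstruction is sound and follows the route one would expect from the way the form is defined here (and, as far as the definitions reproduced in this paper indicate, the route of the cited source): everything is driven by the projection $\bfM$ onto $\oprod_k\hcalA[k]_0$ together with the serial decomposition \eqref{eq: serial decom A}. Your ``twisted trace'' identity $\bfM(uv)=\bfM(v\,\ocalD^2(u))$ is correct --- it reduces by multiplicativity to $u=f_{i,p}$, where both sides vanish unless $v$ is a scalar multiple of $f_{i,p+1}$ and then both equal that scalar times $1-q_i^2$ --- and it does yield symmetry, (a), (b) and (c) formally, with (d) immediate from weight considerations. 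Your induction for (e) also checks out: the correction term $q^{-(\wt(y'),\wt(f_{i,b+1}))}\rmE_{i,b}(y_b)\,y'$ is the right one, the leading term dies against $x''\in\hcalA_{\le b}$ because $f_{i,b+2}$ ends up alone in slot $b+2$, and the $q$-exponents collapse precisely because $\wt(f_{i,b+1})=-\wt(f_{i,b})$ and the lower-slot weights of $x$ and $y$ must agree for a nonzero pairing. The one place where you lean on an external fact is the last step of non-degeneracy: you must identify $\rmE_{i,k}|_{\hcalA[k]}$ with (a nonzero multiple of) the standard twisted derivation $e_i'$ on $\calU_q^-(\g)$, so that the restricted form is a weightwise rescaling of the standard non-degenerate form; you flag this yourself, and the computation $\rmE_{i,k}(f_{j,k})=\delta_{ij}(1-q_i^2)$ together with the derivation property of $[\,\cdot\,,f_{i,k+1}]_q$ does settle it. I see no gap.
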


\section{Braid group action on $\hcalA$}  

In this section, we first recall the braid group action on the quantum group $\calU_q(\g)$ following \cite{LusztigBook} (see also \cite{Saito94}).
We then review the braid group action on the bosonic extension $\hcalA$, introduced in \cite{KKOP21B,JLO2,KKOP24B}.
For a comparison between the braid group actions on $\calU_q(\g)$ and $\hcalA$, we refer the reader to the introduction of \cite{KKOP24B}.

\subsection{Braid and Weyl groups}
We denote by $\ttB_\sfC$ the braid group associated with $\sfC$; i.e, it is the group generated by $\{ \sigma_i\}_{i \in I}$
subject to the following relations:
\begin{align} \label{eq: braid relation}
  \underbrace{\sigma_{i}\sigma_{j}\cdots}_{m_{i,j}\text{-times}} =\underbrace{\sigma_{j}\sigma_{i}\cdots}_{m_{i,j}\text{-times}}
\qt{for $i \ne j \in I$,    }
\end{align}
where 
$m_{i,j} \seteq 2,3,4,6$ according to $c_{i,j}c_{j,i}=0,1,2,3$ respectively.     

We denote by $\ttB^\pm_\sfC$ the submonoid of $\ttB_\sfC$ generated by $\{ \sigma_i^\pm\}_{i \in I}$.

Note that there exists
a group automorphism
\eq
  \psi \col \ttB_\sfC \isoto \ttB_\sfC,
\eneq
which sends $\sigma_i$ to $\sigma_i^{-1}$ for all $i \in I$. 

\smallskip

Let $\sfW_\sfC$ denote the Weyl group associated with $\sfC$, generated by simple reflections $\{ s_i \}_{i \in I}$,
subject to the following relations:
$$\text{(i) $s_i^2= 1$}  \qtq \text{(ii) $\underbrace{s_{i}s_{j}\cdots}_{m_{i,j}\text{-times}} =\underbrace{s_{j}s_{i}\cdots}_{m_{i,j}\text{-times}}$ for $i \ne j \in I$}.$$
Note that $\sfW_\sfC$ contains the longest element $w_\circ$ and that $w_\circ$ induces an involution  $*:I \to I$ sending $i \mapsto i^*$
where $w_\circ(\al_i) = - \al_{i^*}$. 
We usually drop $_\sfC$ in the above notations if there is no danger of confusion.

\smallskip

We write $\pi\col \ttB \to \sfW$ the canonical group homomorphism sending $\sigma_i\mapsto s_i$.
We define $\Updelta$ to be the element in $\ttB^+$
such that $\ell(\Updelta)=\ell(w_\circ)$ and $\pi(\Updelta)=w_\circ$.
Here $\ell$ is the length function.  
We remark that $\Updelta^2$
is contained in the center of $\ttB$. 

\Lemma [{see \cite[Corollary 7.3]{OP24}}] \label{lem: loc red braid}
For any $\ttx \in \ttB$, there exist $\tty \in \ttB^+$ and $m \in \Z_{\ge 0}$ such that $\ttx\tty =\Updelta^m$.     
\enlemma

For $\ttx,\ttz \in \ttB$, we write $\ttx \ledot \ttz$ if there exists $\tty \in \ttB^+$ such that $\ttx\tty =\ttz$, or equivalently $\ttx^{-1}\ttz\in\ttB^+$. 

It is easy to see
\eq
\text{for any $\ttb_1$, $\ttb_2$,
 we have $\ttb_1\ledot \ttb_2\Longleftrightarrow \psi(\ttb_2)\ledot\psi(\ttb_1)$.}
\eneq

\begin{proposition} [{\cite{Gar69} and see also \cite[Chapter 6.6]{KT08}}]\label{prop: gcd} 
The partial ordered set $\ttB$ with the partial order $\ledot$ is a lattice;
i.e., every pair of elements of $\ttB$ has an infimum and a supremum. 
\end{proposition}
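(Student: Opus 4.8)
The plan is to derive the lattice property from the Garside structure of the finite‑type braid group, using the element $\Updelta$ together with Lemma~\ref{lem: loc red braid} and the $\psi$‑equivariance of $\ledot$ recorded above. I will use the following classical facts about the monoid $\ttB^+$ (\cite{Gar69}; see also \cite[Ch.~6.6]{KT08}): $\ttB^+$ is cancellative, left‑divisibility on $\ttB^+$ is a partial order, any $a,b\in\ttB^+$ have a greatest common left‑divisor $a\wedge b$ in $\ttB^+$, and — since the braid relations are length‑homogeneous — $\ttB^+$ carries a well‑defined word length, so the set of left‑divisors of any fixed element of $\ttB^+$ is finite. Throughout I use that $\ledot$ is left‑invariant: $\ttb_1\ledot\ttb_2\iff g\ttb_1\ledot g\ttb_2$ for $g\in\ttB$, since $(g\ttb_1)^{-1}(g\ttb_2)=\ttb_1^{-1}\ttb_2$.

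\emph{Step 1: sandwiching between powers of $\Updelta$.} For $\ttx\in\ttB$, Lemma~\ref{lem: loc red braid} gives $\tty\in\ttB^+$ and $m\ge0$ with $\ttx\tty=\Updelta^m$; hence $\ttx^{-1}\Updelta^m=\tty\in\ttB^+$, i.e. $\ttx\ledot\Updelta^m$. Applying this to $\psi(\ttx)$ yields $\psi(\ttx)\ledot\Updelta^{m'}$ for some $m'\ge0$, and since $\psi(\Updelta)=\Updelta^{-1}$ (because $\psi$ is group inversion composed with the length‑preserving anti‑automorphism $\sigma_i\mapsto\sigma_i$, which fixes $\Updelta$), this reads $\psi(\ttx)\ledot\psi(\Updelta^{-m'})$; the $\psi$‑equivariance then gives $\Updelta^{-m'}\ledot\ttx$. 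So every $\ttx$ satisfies $\Updelta^{-N}\ledot\ttx\ledot\Updelta^M$ for suitable $N,M\in\Z_{\ge0}$. For a pair $\ttx,\ttz$, after enlarging $N,M$ we may assume $\Updelta^{-N}\ledot\ttx,\ttz\ledot\Updelta^M$, so both lie in the interval $P\seteq[\Updelta^{-N},\Updelta^M]$. Left‑translating by $\Updelta^N$ identifies $P$ with the set of left‑divisors of $\Updelta^{N+M}$ in $\ttB^+$, which is finite; thus $P$ is a finite poset with bottom $\Updelta^{-N}$ and top $\Updelta^M$ containing $\ttx,\ttz$.

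\emph{Step 2: infima, then suprema.} By left‑invariance it suffices to produce $\inf(a,b)$ for $a,b\in\ttB^+$, and I claim $\inf(a,b)=a\wedge b$. It is a common lower bound; conversely, if $c\in\ttB$ with $c\ledot a$ and $c\ledot b$, then by Step 1 write $c=\Updelta^{-k}p$ with $p\in\ttB^+$. From $p^{-1}(\Updelta^k a)=c^{-1}a\in\ttB^+$ and likewise for $b$, the positive element $p$ left‑divides $\Updelta^k a$ and $\Updelta^k b$ in $\ttB^+$, hence left‑divides their gcd, which equals $\Updelta^k(a\wedge b)$ (cancel the common left factor $\Updelta^k$, using cancellativity and antisymmetry of left‑divisibility in $\ttB^+$). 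Therefore $c^{-1}(a\wedge b)=p^{-1}\Updelta^k(a\wedge b)\in\ttB^+$, i.e. $c\ledot a\wedge b$; so all binary infima exist in $\ttB$. For suprema, given $\ttx,\ttz$ take $P$ as in Step 1: by the infimum property $P$ is closed under binary infima and has a top element, and a finite poset with binary infima and a top is a lattice (the join of two elements is the infimum of the finite nonempty set of their common upper bounds, which is itself a common upper bound), so $\ttx,\ttz$ have a least upper bound $s$ in $P$. Finally $s=\sup_\ttB(\ttx,\ttz)$: for any common upper bound $w\in\ttB$ of $\ttx,\ttz$, writing $w\wedge\Updelta^M$ for $\inf_\ttB(w,\Updelta^M)$, we have $\ttx,\ttz\ledot w\wedge\Updelta^M$ and $\Updelta^{-N}\ledot\ttx\ledot w\wedge\Updelta^M\ledot\Updelta^M$, so $w\wedge\Updelta^M\in P$ is a common upper bound of $\ttx,\ttz$ in $P$, whence $s\ledot w\wedge\Updelta^M\ledot w$.

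The step I expect to be most delicate is Step 2: handling the a priori non‑positive common lower bound $c$ by passing to its left‑denominator form $\Updelta^{-k}p$ and verifying that the monoid gcd really computes the infimum in the group, together with the bookkeeping in Steps 1 and 2 needed to ensure that the infima and suprema computed inside the finite bounding interval $P$ agree with those taken in all of $\ttB$. Everything else reduces to standard facts about $\ttB^+$ and to left‑invariance of $\ledot$.
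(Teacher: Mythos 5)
Your proof is correct. The paper itself offers no argument for this proposition — it is stated with citations to \cite{Gar69} and \cite[Ch.~6.6]{KT08} — so there is no in-paper proof to match; what you have written is a legitimate reconstruction of how the group-level lattice property follows from the monoid-level Garside facts. You take as input exactly the nontrivial content of the cited sources (cancellativity of $\ttB^+$, existence of greatest common left-divisors there, and the length function making divisor sets finite), and then perform the standard reduction from $\ttB$ to $\ttB^+$: sandwiching any element between powers of $\Updelta$ via Lemma~\ref{lem: loc red braid} together with $\psi(\Updelta)=\Updelta^{-1}$; showing the monoid gcd computes the group infimum by clearing denominators with $\Updelta^k$ and using $\gcd(ua,ub)=u\gcd(a,b)$ (your parenthetical cancellation-plus-antisymmetry argument is the right justification); and manufacturing joins from meets inside the finite bounded interval $[\Updelta^{-N},\Updelta^M]$, with the final check that the supremum computed there is a supremum in all of $\ttB$. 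All the points you flag as delicate are handled correctly, including left-invariance of $\ledot$ and the identification of $\psi$ as inversion composed with the reversal anti-automorphism fixing $\Updelta$. The one caveat worth recording is that the existence of gcds in $\ttB^+$ is itself the hard theorem of Garside, so your proof is a derivation of the stated group-level fact from that monoid-level input rather than a from-scratch proof — but that is entirely consistent with the proposition's citation, and it is also how the cited references themselves organize the argument.
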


It is easy to see
\eqn
\text{for any $\ttb_1$, $\ttb_2$,
 we have $\ttb_1\ledot \ttb_2\Longleftrightarrow \psi(\ttb_2)\ledot\psi(\ttb_1)$.}
\eneqn
Hence we have
\eq
\psi(\ttb_1\wedge\ttb_2)=\psi(\ttb_1)\vee\psi(\ttb_2).
\label{eq:psibb}
\eneq

The infimum of $\ttx$ and $\ttz$ in $\ttB$ is denoted by $\ttx \wedge \ttz$
and the supremum is denoted by $\ttx \vee \ttz$.

\begin{theorem} [{Garside left normal form (see \cite{WP,EM94})}] \label{thm: Garside} 

Each element $\ttb \in \ttB$ can be presented
as 
$$
\Updelta^r \ttx_1 \cdots \ttx_k,
$$
where $r \in \Z$, $k \in \Z_{\ge0}$, $1 \lessdot \ttx_s \lessdot \Delta$
, and $\ttx_s=\Updelta\wedge(\ttx_s\ttx_{s+1})$ for $1 \le s <k$.
\end{theorem}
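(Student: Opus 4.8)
\emph{Strategy.} This is the standard Garside-theory argument: produce the factorization by repeatedly splitting off the largest simple left-divisor (the left-greedy algorithm), using the lattice structure of Proposition~\ref{prop: gcd}, the local reducibility of Lemma~\ref{lem: loc red braid}, and the centrality of $\Updelta^2$. First I would reduce to the case $\ttb\in\ttB^+$: applying Lemma~\ref{lem: loc red braid} to $\ttb^{-1}$ yields $\ttc\in\ttB^+$ and $m\in\Z_{\ge0}$ with $\ttb^{-1}\ttc=\Updelta^m$, and after replacing $(\ttc,m)$ by $(\ttc\Updelta,m+1)$ when $m$ is odd we may take $m$ even, so $\Updelta^m$ is central and $\ttb=\Updelta^{-m}\ttc$ with $\ttc\in\ttB^+$. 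It then remains to write $\ttc=\ttx_1\cdots\ttx_k$ in the required form and to pull a power of $\Updelta$ to the left.

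\emph{Greedy extraction.} Put $\ttc_0:=\ttc$ and, while $\ttc_j\ne1$, set $\ttx_{j+1}:=\Updelta\wedge\ttc_j$ (the infimum for $\ledot$ exists by Proposition~\ref{prop: gcd}) and $\ttc_{j+1}:=\ttx_{j+1}^{-1}\ttc_j$. From $1\ledot\Updelta$ and $1\ledot\ttc_j$ we get $1\ledot\ttx_{j+1}\ledot\Updelta$, so $\ttx_{j+1}\in\ttB^+$, and $\ttx_{j+1}\ledot\ttc_j$ gives $\ttc_{j+1}\in\ttB^+$. When $\ttc_j\ne1$ its leftmost Chevalley letter $\sigma_i$ left-divides both $\ttc_j$ and $\Updelta$ — each $\sigma_i$ left-divides $\Updelta$, since $w_\circ$ admits a reduced word starting with $s_i$ and the positive lift $\Updelta$ does not depend on the chosen reduced word (any two differ by braid moves) — hence $\ttx_{j+1}\ne1$. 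Using the homomorphism $\ttB\to\Z$ sending each $\sigma_i$ to $1$ (well defined since both sides of every braid relation have the same number of letters), which restricts on $\ttB^+$ to a length function $\ell$ with $\ell(\ttz)\ge1$ for $\ttz\ne1$, we get $\ell(\ttc_{j+1})=\ell(\ttc_j)-\ell(\ttx_{j+1})<\ell(\ttc_j)$; so $\ttc_k=1$ for some $k$, and $\ttc=\ttx_1\cdots\ttx_k$ with $\ttc_{s-1}=\ttx_s\cdots\ttx_k$ and $\ttx_s=\Updelta\wedge\ttc_{s-1}$ for all $s$.

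\emph{Normalization and the greedy condition.} For $1\le s<k$, since $\ttx_{s+2}\cdots\ttx_k\in\ttB^+$ we have $\ttx_s\ttx_{s+1}\ledot\ttc_{s-1}$; monotonicity of $\wedge$ in a lattice then gives $\Updelta\wedge(\ttx_s\ttx_{s+1})\ledot\Updelta\wedge\ttc_{s-1}=\ttx_s$, while $\ttx_s\ledot\Updelta$ and $\ttx_s\ledot\ttx_s\ttx_{s+1}$ give the reverse inequality, so $\ttx_s=\Updelta\wedge(\ttx_s\ttx_{s+1})$. Now absorb leading copies of $\Updelta$: if $\ttx_1=\Updelta$, rewrite $\Updelta^{-m}\ttx_1\ttx_2\cdots\ttx_k=\Updelta^{-m+1}\ttx_2\cdots\ttx_k$ and repeat, stopping when the list is empty (then $\ttb=\Updelta^r$) or when $\ttx_1\ne\Updelta$. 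For the remaining list no factor equals $\Updelta$, by induction: if $\ttx_s\ne\Updelta$, i.e.\ $\Updelta\not\ledot\ttc_{s-1}$, and yet $\Updelta\ledot\ttc_s$, write $\ttc_s=\Updelta\ttw$ with $\ttw\in\ttB^+$; then $\ttc_{s-1}=\ttx_s\ttc_s=\ttx_s\Updelta\ttw=\Updelta(\Updelta^{-1}\ttx_s\Updelta)\ttw$, and since conjugation by $\Updelta$ permutes the $\sigma_i$ (via the involution $*$) and so stabilizes $\ttB^+$, this exhibits $\Updelta\ledot\ttc_{s-1}$, a contradiction; hence $\ttx_{s+1}=\Updelta\wedge\ttc_s\ne\Updelta$. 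Taking $r$ to be the exponent produced by the normalization, $\ttb=\Updelta^r\ttx_1\cdots\ttx_k$ has all the asserted properties.

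\emph{Main obstacle.} There is no serious obstruction once Proposition~\ref{prop: gcd} and Lemma~\ref{lem: loc red braid} are in hand; the points that need care are the parity/centrality step that moves the power of $\Updelta$ to the left, and the verification that the greedy factors satisfy $\ttx_s=\Updelta\wedge(\ttx_s\ttx_{s+1})$ rather than merely $\ttx_s\ledot\Updelta$ — both reduce to routine lattice manipulations. Uniqueness of the presentation, which is not claimed in the statement, would follow from the same considerations.
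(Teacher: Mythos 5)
The paper does not actually prove this theorem: it is stated as a known result with citations to \cite{WP,EM94}, followed only by a remark characterizing the normal form. So there is no internal proof to compare against; what you have written is the standard left-greedy Garside argument, and it is correct. Your reduction to $\ttB^+$ via Lemma~\ref{lem: loc red braid} applied to $\ttb^{-1}$ (with the parity adjustment so that $\Updelta^m$ is central) is sound, the greedy extraction $\ttx_{j+1}=\Updelta\wedge\ttc_j$ terminates by the length homomorphism and produces nontrivial factors because every $\sigma_i$ left-divides $\Updelta$ in finite type, and the lattice monotonicity argument for $\ttx_s=\Updelta\wedge(\ttx_s\ttx_{s+1})$ together with the $\Updelta$-absorption step (using that conjugation by $\Updelta$ permutes the generators, hence preserves $\ttB^+$) gives exactly the stated form with $1\lessdot\ttx_s\lessdot\Updelta$. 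The two auxiliary facts you invoke without proof --- well-definedness of the positive lift $\Updelta$ of $w_\circ$ (Matsumoto--Tits) and $\Updelta\sigma_i\Updelta^{-1}=\sigma_{i^*}$ --- are standard and already implicit in the paper's definition of $\Updelta$ and in Lemma~\ref{lem: shift}, so citing or proving them in one line would make the argument self-contained. Your construction also matches the paper's follow-up remark that $\ttx_j=\bl(\ttx_1\cdots\ttx_{j-1})^{-1}\Updelta^{-r}\ttb\br\wedge\Updelta$, which is precisely your greedy recursion.
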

Note that the condition for the Garside normal form of $\ttb$
is that $r$ is the largest integer such that $\Updelta^{-r}\ttb\in\ttB^{+}$,
and $k$ is the largest integer such that $\ttx_k\not=1$, where
$\ttx_j\seteq\bl (\ttx_1\cdots \ttx_{j-1})^{-1}\Updelta^{-r}\ttb\br\wedge\Updelta$ for any
$j\in\Z_{>0}$.

\subsection{Braid group actions on $\calU_q(\g)$ and $\hcalA$}

It is well known that there exists a braid group action on $\calU_q(\g)$.
We briefly recall this action following \cite{LusztigBook}.
For each $i \in I$, we set $\sfS_i \seteq T_{i,-1}'$ and $\sfS_i^{*} \seteq T_{i,1}''$,
where $T_{i,-1}'$ and $T_{i,1}''$ denote Lusztig’s braid symmetries defined in \cite[Chapter~37]{LusztigBook} and described as follows:
\begin{subequations} \label{eq: Lusztig action}
  \begin{gather}
   \ba{lll}
&\sfS_i(t_i)\seteq t_i^{-1},&\quad  \sfS_i(t_j)\seteq t_jt_i^{-\sfc_{i,j}},\\
&\sfS_i(f_i) \seteq -e_it_i,&\quad
\sfS_i(f_j) \seteq \sum_{r+s = -\sfc_{i,j}} (-q_i)^s f_i^{(r)} f_j f_i^{(s)} \ \ ( i \ne j),\\
&\sfS_i(e_i) \seteq -t_i^{-1}f_i,&\quad
\sfS_i(e_j)\seteq \sum_{r+s = -\sfc_{i,j}} (-q_i)^{-r} e_i^{(r)} e_j e_i^{(s)}  \ \ ( i \ne j),
\ea\\[1ex]
\ba{lll}
&\sfS_i^*(t_i)\seteq t_i^{-1},&\quad  \sfS_i^*(t_j)\seteq t_jt_i^{-\sfc_{i,j}},\\
&\sfS_i^*(f_i) \seteq -t_i^{-1}e_i, &\quad \sfS_i^{*}(f_j)\seteq \sum_{r+s = -\sfc_{i,j}} (-q_i)^r f_i^{(r)} f_j f_i^{(s)}  \ \ ( i \ne j),\\
&\sfS_i^*(e_i) \seteq -f_it_i,&\quad
\sfS_i^*(e_j) \seteq \sum_{r+s = -\sfc_{i,j}} (-q_i)^{-s} e_i^{(r)} e_j e_i^{(s)}  \ \ ( i \ne j).
\ea
\end{gather}
\end{subequations} 
Here $f_i^{(n)}= f_i^n/[n]_i!$ and $e_i^{(n)}= e_i^n/[n]_i!$ for $n \in \Z_{\ge 1}$. 
Then we have
$\sfS_i^* \circ \sfS_i = \sfS_i \circ \sfS_i^* = {\rm id}$
(see also \cite{Saito94}) and 
the automorphisms $\{ \sfS_i\}_{i\in I}$ satisfy the relations of $\ttB_\sfC$ and hence $\ttB_\sfC$ acts on $U_q(\g)$ via $\{ \sfS_i\}_{i \in I}$. 

The braid group action on the bosonic extension $\hcalA$ is introduced in \cite{KKOP21B,JLO2,KKOP24B}.

\begin{theorem}[{\cite[Theorem 3.1]{KKOP24B}}]
For each $i \in I$, there exist unique $\Q(q)^{1/2}$-algebra automorphisms $\TT_i$  and $\TT_i^\star$ on $\hcalA$ such that
\begin{subequations}
\begin{align}
\TT_i(f_{j,m}) = \bc f_{j,m+1} & \text{ if $i =j$}, \\[1ex]
 \displaystyle\sum_{r+s = -\sfc_{i,j}}  (-q_i)^{s} \calF_{i,m}^{(r)} f_{j,m} \calF_{i,m}^{(s)}    & \text{ if $i \ne j$},
\ec
\end{align}
and
\begin{align}
\TT^\star_i(f_{j,m}) = \bc f_{j,m-1} & \text{ if $i =j$}, \\[1ex]
 \displaystyle\sum_{r+s = -\sfc_{i,j}}  (-q_i)^{r} \calF_{i,m}^{(r)} f_{j,m} \calF_{i,m}^{(s)} & \text{ if $i \ne j$},
\ec
\end{align}
\end{subequations}
where $\calF_{i,m} \seteq q_i^{1/2} (1-q_i^2)^{-1}f_{i,m}$ and $\calF_{i,m}^{(n)} \seteq \calF_{i,m}^n/[n]_i!$
for $n \in \Z_{\ge0}$. Moreover, we have 
\bnum
\item $\TT_i \circ \TT_i^\star = \TT_i^\star \circ \TT_i = \ {\rm id}$,
\item $\{ \TT_i \}_{i \in I}$ satisfy the relations of $\ttB$.     
  \ee
\end{theorem}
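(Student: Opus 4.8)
\medskip
\noindent\emph{Proof strategy.}\quad
The plan is to treat in turn: uniqueness, existence of $\TT_i$ as an algebra endomorphism, the identity $\TT_i\circ\TT_i^\star=\TT_i^\star\circ\TT_i=\mathrm{id}$ (which also yields that $\TT_i$ is an automorphism), and the braid relations. Uniqueness is immediate, since $\TT_i$ and $\TT_i^\star$ are prescribed on the algebra generators $\{f_{j,m}\}_{(j,m)\in I\times\Z}$. Two preliminary reductions cut down the work. First: since $\star$ is an anti-automorphism with $\star(f_{j,p})=f_{j,-p}$ (hence $\star(\calF_{i,-p}^{(n)})=\calF_{i,p}^{(n)}$), a short computation on generators shows that $\star\circ\TT_i\circ\star$ takes exactly the prescribed values of $\TT_i^\star$; as conjugation of an algebra homomorphism by an anti-automorphism again gives an algebra homomorphism, it is enough to prove that $\TT_i$ alone is a well-defined algebra endomorphism and then set $\TT_i^\star\seteq\star\circ\TT_i\circ\star$. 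Second: $\TT_i$ obviously commutes with $\ocalD$ on generators, so once well defined it is $\ocalD$-equivariant, and it suffices to verify the relations \eqref{eq: bosonic extension} for generators supported in one window of three consecutive levels.

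For the well-definedness of $\TT_i$ I would verify that the proposed images satisfy \eqref{eq: bosonic extension}(a)--(b). The type-(b) relations between two levels at distance $\ge 2$ survive for free: after applying $\TT_i$ the two sides are supported on disjoint windows $\{m,m+1\}$ and $\{p,p+1\}$, so \eqref{eq:br} and the distant $q$-commutation of \eqref{eq: bosonic extension}(b) apply verbatim. The remaining relations --- the $q$-Serre relations and the $q$-boson / adjacent-level relations --- involve generators at a single level $m$ or at two adjacent levels $\{m,m+1\}$. The key point is that the relations of $\hcalA$ linking level $m$ and level $m+1$ have exactly the same shape as the relations of $\calU_q(\g)$ binding the Chevalley generator $e_i$ to the $f_j$ (the toral part $t_i$ being absorbed into the $q$-scalars): the $q$-boson relation $f_{i,m}f_{i,m+1}=q_i^2 f_{i,m+1}f_{i,m}+(1-q_i^2)$ corresponds to $[e_i,f_i]$, the $q$-commutation of $f_{i,m+1}$ with $f_{j,m}$ ($j\ne i$) corresponds to $[e_i,f_j]=0$, and the normalization $\calF_{i,m}=q_i^{1/2}(1-q_i^2)^{-1}f_{i,m}$ is precisely the one making the divided powers $\calF_{i,m}^{(r)}$ interact with the $q$-boson relation the way $f_i^{(r)}$ interacts with $\mathrm{ad}(e_i)$ (intrinsically, the subalgebra generated by $\{f_{j,m}\}_{j\in I}$ and $f_{i,m+1}$ is a piece of Kashiwara's $q$-boson algebra, on which $\TT_i$ becomes Lusztig's $\sfS_i=T'_{i,-1}$). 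Under this correspondence, each identity to be checked for $\TT_i(f_{j,m})=\sum_{r+s=-\sfc_{i,j}}(-q_i)^s\calF_{i,m}^{(r)}f_{j,m}\calF_{i,m}^{(s)}$ is precisely one of the $q$-binomial identities Lusztig uses in proving that $\sfS_i$ is a well-defined automorphism of $\calU_q(\g)$ --- equivalently, it is the image under the automorphism $\sfS_i$ of a defining relation of $\calU_q(\g)$. I would thus transport these identities from the known fact that $\sfS_i$ is an automorphism (or, alternatively, redo the $q$-binomial computations by hand).

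Granting that $\TT_i$, hence $\TT_i^\star=\star\circ\TT_i\circ\star$, is a well-defined algebra endomorphism, I would check $\TT_i\circ\TT_i^\star=\TT_i^\star\circ\TT_i=\mathrm{id}$ on generators: on $f_{i,m}$ both composites are $f_{i,m}\mapsto f_{i,m\mp1}\mapsto f_{i,m}$, and on $f_{j,m}$ with $j\ne i$ the computation runs inside the two-level picture and is the transport of Lusztig's $\sfS_i\circ\sfS_i^*=\mathrm{id}$; in particular each $\TT_i$ is an automorphism with inverse $\TT_i^\star$. Finally, both sides of a braid relation $\underbrace{\TT_i\TT_j\cdots}_{m_{i,j}\text{-times}}=\underbrace{\TT_j\TT_i\cdots}_{m_{i,j}\text{-times}}$ are algebra endomorphisms, so they need only be compared on each generator $f_{k,m}$, and by $\ocalD$-equivariance only on $f_{k,0}$; treating the cases $k\in\{i,j\}$ and $k\notin\{i,j\}$ and passing to the two-level picture reduces this to the braid relation for $\sfS_i,\sfS_j$ on $\calU_q(\g)$ --- where $m_{i,j}\in\{2,3,4,6\}$, read off from $\sfc_{i,j}\sfc_{j,i}\in\{0,1,2,3\}$, enters --- which may be taken as known.

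The main obstacle is the comparison in the second paragraph: making the identification between the two-level subalgebra $\langle f_{j,m}\ (j\in I),\,f_{i,m+1}\rangle\subseteq\hcalA$ and ``$\calU_q^-(\g)$ with $e_i$ adjoined'' precise enough that Lusztig's theorem transfers literally, or --- essentially equivalently --- executing the $q$-Serre and $q$-boson $q$-binomial identities for the divided powers $\calF_{i,m}^{(r)}$ directly, keeping careful track of the half-integral powers of $q$ hidden in $\calF_{i,m}$. Once this is in place, everything else is bookkeeping with the $\Z$-grading of $\hcalA$ by levels and the $\ocalD$-equivariance.
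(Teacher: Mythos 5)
This theorem is not proved in the paper at all: it is quoted with the citation \cite[Theorem 3.1]{KKOP24B}, so there is no internal proof to compare your proposal against. Judged on its own terms, your skeleton consists of the standard reductions, and several of them are sound: uniqueness is immediate from the prescribed values on generators; the identity $\star\circ\TT_i\circ\star=\TT_i^\star$ does follow from a one-line computation on generators (the paper later uses exactly this in \eqref{eq:TTsta}); $\ocalD$-equivariance correctly localizes the verification to a window of finitely many levels; and the distant relations in \eqref{eq: bosonic extension} are preserved for the weight-theoretic reason you give, via \eqref{eq:br} together with the $s_i$-invariance of the bilinear form $(\ ,\ )$.

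The gap is in the two steps you propose to ``transport from Lusztig.'' The subalgebra of $\hcalA$ generated by $\{f_{j,m}\}_{j\in I}$ and $f_{i,m+1}$ is \emph{not} a subalgebra of $\calU_q(\g)$ under the dictionary $f_{i,m+1}\leftrightarrow -e_it_i$: in $\hcalA$ the $q$-commutator of $f_{i,m+1}$ with $f_{i,m}$ is the scalar $1-q_i^{-2}$, whereas in $\calU_q(\g)$ the corresponding commutator of $-e_it_i$ with $f_i$ produces a term in $t_i^2$, which is not central scalar. So Lusztig's theorem that $\sfS_i=T'_{i,-1}$ is a well-defined automorphism does not apply verbatim; one must either set up a precise intertwining with the $q$-boson algebra and prove the analogous statements there (which are not in \cite{LusztigBook}), or carry out the $q$-binomial verifications from scratch --- and this is the actual content of \cite{KKOP24B}. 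The braid relations are in worse shape: since $\TT_i(f_{i,0})=f_{i,1}$, the orbit of $f_{i,0}$ under $\TT_i\TT_j\cdots$ immediately leaves $\hcalA[0]$ and mixes levels, so it cannot be compared with the action of $\sfS_i,\sfS_j$ on any single copy of $\calU_q^-(\g)$ without an intertwining statement that you have not formulated, let alone proved. As written, your argument settles the routine reductions but leaves both substantive verifications as assertions.
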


From the above theorem, for each $\ttb \in \ttB$ with $\ttb= \sigma_{i_1}^{\ep_1}\sigma_{i_2}^{\ep_2} \cdots
\sigma_{i_r}^{\ep_r}$ $(\ep_k \in \{ \pm 1 \})$, 
$$ \TT_\ttb \seteq \TT^{\ep_1}_{i_1}\TT^{\ep_2}_{i_2} \cdots \TT^{\ep_r}_{i_r} \text{ is well-defined}.$$
In particular $\TT_i = \TT_{\sigma_i}$. Note that, for any homogeneous element $x$, we have $\wt(\TT_i(x)) = s_i \wt(x)$.
Since $\TT_i^\star = \star \circ \TT_i \circ \star$, we have
\eq
\star\circ\TT_\ttb\circ\star=\TT_{\psi(\ttb)}\qt{for any $\ttb\in\ttB$.}
\label{eq:TTsta}
\eneq

\begin{lemma} [{\cite[Corollary 8.4 (b)]{JLO2}, \cite[Lemma 4.4]{OP24}}] \label{lem: shift}
For any $(i,p) \in I \times \Z$, we have
$$
\TT_\Updelta(f_{i,p}) = f_{i^*,p+1}. 
$$
\end{lemma}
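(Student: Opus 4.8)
The plan is to reduce the statement to the single identity $\TT_\Updelta(f_{i,0})=f_{i^*,1}$ for $i\in I$, and then to identify the homogeneous element $\TT_\Updelta(f_{i,0})$. First I would observe that $\ocalD$ commutes with each $\TT_j$ ($j\in I$): on the algebra generators, if $j=k$ then $\ocalD\TT_j(f_{k,m})=\TT_j\ocalD(f_{k,m})=f_{j,m+2}$, and if $j\ne k$ then both $\ocalD\TT_j(f_{k,m})$ and $\TT_j\ocalD(f_{k,m})$ equal $\sum_{r+s=-\sfc_{j,k}}(-q_j)^{s}\calF_{j,m+1}^{(r)}f_{k,m+1}\calF_{j,m+1}^{(s)}$, because the defining formulas for $\TT_j$ are manifestly equivariant under the shift $m\mapsto m+1$. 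Hence $\ocalD\circ\TT_\ttb=\TT_\ttb\circ\ocalD$ for every $\ttb\in\ttB$, so $\TT_\Updelta(f_{i,p})=\TT_\Updelta(\ocalD^{p}f_{i,0})=\ocalD^{p}\TT_\Updelta(f_{i,0})$, and it suffices to treat the case $p=0$, the general case following by applying $\ocalD^{p}$.

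For $p=0$, a first consistency check comes from weights: since $\pi(\Updelta)=w_\circ$ and $\wt(\TT_j x)=s_j\wt(x)$ for homogeneous $x$, the element $\TT_\Updelta(f_{i,0})$ is homogeneous of weight $w_\circ(-\al_i)=\al_{i^*}=\wt(f_{i^*,1})$; but by \eqref{eq: weight decomp A} the weight space $\hcalA_{\al_{i^*}}$ is infinite-dimensional, so this alone is far from the claim. To pin the element down, I would compare the braid action $\{\TT_i\}_{i\in I}$ on $\hcalA$ with Lusztig's symmetries $\{\sfS_i\}_{i\in I}$ on $\calU_q(\g)$. The point is that the subalgebra $\hcalA[0,1]$ generated by $\{f_{i,0},f_{i,1}\}_{i\in I}$ satisfies, besides the $q$-Serre relations among $\{f_{i,0}\}$ and among $\{f_{i,1}\}$, only the relations $f_{i,0}f_{j,1}=q_i^{\sfc_{i,j}}f_{j,1}f_{i,0}+\delta_{i,j}(1-q_i^2)$ coming from \eqref{eq: bosonic extension}(b); after rescaling the $f_{i,1}$ these are the defining relations of a $q$-boson-type algebra built on $\calU_q^-(\g)$ with $f_{i,0}\leftrightarrow f_i$. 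Using this, together with the comparison between the two braid actions (cf.\ the introduction of \cite{KKOP24B}) and the isomorphism $\hcalA[0]\simeq\calU_q^-(\g)$, the claim $\TT_\Updelta(f_{i,0})=f_{i^*,1}$ becomes the classical fact that $T_{w_\circ}$ carries $F_i$ to a torus-normalized multiple of $E_{i^*}$, the normalization collapsing to $1$ in the bosonic setting since $\hcalA$ has no torus part. (Alternatively, one could try a direct induction on $|I|$ via a parabolic Garside factorization $\Updelta=\ttu\,\Updelta_J$, $J=I\setminus\{i\}$, but analysing $\TT_\ttu$ meets the same difficulty discussed next.)

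The main difficulty is that when $\TT_\Updelta(f_{i,0})$ is expanded along a reduced word for $\Updelta$, the intermediate terms are genuinely non-monomial in the generators $f_{j,m}$, so the eventual collapse to the single generator $f_{i^*,1}$ is invisible to a term-by-term ``which generator goes to which generator'' analysis and to any weight count; one must control the whole orbit. This is precisely what is carried out in \cite{JLO2,OP24}: in the simply-laced case $\hcalA$ is the quantum Grothendieck ring of the Hernandez--Leclerc category, $f_{i,m}$ is (a truncation of) the class of the fundamental module $L(i,m)$, and $\TT_\Updelta$ is realized by the spectral-shift functor sending $[L(i,m)]$ to $[L(i^*,m+1)]$; together with the reduction to $p=0$ this gives the lemma in all finite types.
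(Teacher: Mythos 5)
The paper does not actually prove this lemma: it is imported verbatim from \cite[Corollary 8.4 (b)]{JLO2} and \cite[Lemma 4.4]{OP24}, so there is no internal argument to compare against. Your proposal ends in the same place --- the substantive identity $\TT_\Updelta(f_{i,0})=f_{i^*,1}$ is ultimately deferred to those same references --- so as a treatment it is consistent with what the paper does, and your preliminary reduction is a genuine (if modest) addition: the check that $\ocalD$ commutes with each $\TT_j$ on generators is correct (both sides send $f_{j,m}$ to $f_{j,m+2}$, and for $k\ne j$ both give the shifted sum over $r+s=-\sfc_{j,k}$), hence $\ocalD\circ\TT_\ttb=\TT_\ttb\circ\ocalD$ for all $\ttb\in\ttB$ and the case $p=0$ suffices. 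The weight computation $\wt(\TT_\Updelta f_{i,0})=w_\circ(-\al_i)=\al_{i^*}$ is also correct, and you rightly note it determines nothing beyond the weight.

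Where your sketch would not survive as a self-contained proof is the passage through Lusztig's symmetries. The assertion that the claim ``becomes the classical fact that $T_{w_\circ}$ carries $F_i$ to a torus-normalized multiple of $E_{i^*}$, the normalization collapsing to $1$'' is not justified: in $\calU_q(\g)$ one has $\sfS_i(f_i)=-e_it_i$, so signs and powers of $q$ appear at every step of a reduced word for $w_\circ$, whereas the lemma asserts the coefficient in $\hcalA$ is exactly $1$. The absence of a torus in $\hcalA$ does not by itself explain why these signs and $q$-powers cancel; that cancellation is precisely the content of the computation carried out in \cite{JLO2,OP24} (algebraically in \cite{OP24} for all finite types, not only via the quantum Grothendieck ring realization you describe, which covers the simply-laced case). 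If you intend the middle paragraph as a proof rather than a heuristic, that normalization step is a genuine gap; if you intend only the reduction to $p=0$ plus citation, your treatment matches the paper's.
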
 

By Lemma~\ref{lem: shift}, we have
\eq \TT_{\Updelta^m} \hcalA_{<0} = \hcalA_{<m} \qtq
\TT_{\Updelta^m} \hcalA_{\ge0} = \hcalA_{\ge m}
\qt{for any $m\in\Z$}.\label{eq:Delta}
\eneq

\Lemma[{\cite[Proposition~4.7]{KKOP24B}}]\label{lem:int}
Let $\ttb=\sigma_{i_1}\cdots \sigma_{i_r}\in\ttB^+$,
and set $p_k=\TT_{i_1}\cdots\TT_{i_{k-1}}f_{i_k,0}$
for $1\le k\le r$.
Then, we have
\bnum
\item $\TT_\ttb\hcalA_{\ge0}\subset\hcalA_{\ge0}$ and $\TT_\ttb\hcalA_{<0}\supset\hcalA_{<0}$,
\item in particular 
  $\TT_{\ttb_1}\hcalA_{\ge0}\supset\TT_{\ttb_2}\hcalA_{\ge0}$ and $\TT_{\ttb_1}\hcalA_{<0}\subset\TT_{\ttb_2}\hcalA_{<0}$
  for any $\ttb_1,\ttb_2\in\ttB$ such that
  $\ttb_1\ledot \ttb_2$,
\item
$\hcalA(\ttb)\seteq\TT_\ttb \hcalA_{<0}\cap\hcalA_{\ge0} =\Q(q^{1/2})[p_r]
\tens\Q(q^{1/2})[p_{r-1}]\tens\cdots\tens\Q(q^{1/2})[p_1]$ as a $\Q(q^{1/2})$-vector space.
\ee
\enlemma

\section{Faithfulness} In this section, we prove the following theorem, which is the goal of this paper.

\begin{theorem} \label{thm: main}
The braid group action on $\hcalA$ via $\{ \TT_i\}_{i \in I}$ is faithful. 
\end{theorem}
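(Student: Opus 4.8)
The plan is to deduce faithfulness, via the Garside‑type structure of $\ttB$, from one structural property of the chain of subalgebras $\TT_\ttb\hcalA_{<0}$. Suppose $\TT_\ttb=\mathrm{id}$. By Lemma~\ref{lem: loc red braid} there are $\tty\in\ttB^+$ and $m\in\Z_{\ge0}$ with $\ttb\tty=\Updelta^m$, hence $\TT_\tty=\TT_{\Updelta^m}$ and $\ttb=\Updelta^m\tty^{-1}$; so it suffices to prove that $\TT_\tty=\TT_{\Updelta^m}$ forces $\tty=\Updelta^m$ for $\tty\in\ttB^+$, which then gives $\ttb=1$. I would do this by induction on $m$. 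For $m=0$ we have $\TT_\tty=\mathrm{id}$, so $\hcalA(\tty)=\TT_\tty\hcalA_{<0}\cap\hcalA_{\ge0}=\hcalA_{<0}\cap\hcalA_{\ge0}=\Q(q^{1/2})$ by \eqref{eq: serial decom A}; since Lemma~\ref{lem:int}(iii) exhibits $\hcalA(\tty)$ as a tensor product of $\ell(\tty)$ one‑variable polynomial algebras, this forces $\ell(\tty)=0$, i.e.\ $\tty=1$. For $m\ge1$, \eqref{eq:Delta} gives $\TT_{\Updelta^m}\hcalA_{<0}=\hcalA_{<m}$, so $\hcalA(\tty)=\hcalA_{<m}\cap\hcalA_{\ge0}=\hcalA[0,m-1]\ni f_{i,0}$ for every $i\in I$. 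Granting the Key Lemma below, $\sigma_i\ledot\tty$ for all $i$; since $\Updelta\wedge\tty$ is then left‑divisible by every atom $\sigma_i$ and is a left divisor of $\Updelta$, it must equal $\Updelta$ (the Garside element is the left lcm of the atoms), so $\Updelta\ledot\tty$. Writing $\tty=\Updelta\tty_1$ with $\tty_1\in\ttB^+$, we get $\TT_{\tty_1}=\TT_{\Updelta^{m-1}}$, and induction concludes.

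The Key Lemma is: for $\ttb\in\ttB^+$ and $i\in I$, if $f_{i,0}\in\hcalA(\ttb)$ then $\sigma_i\ledot\ttb$. I would prove it by contraposition. If $\sigma_i$ does not left‑divide $\ttb$, then $\ttb\wedge\sigma_i=1$ by Proposition~\ref{prop: gcd}, so it is enough to establish the intersection formula $\TT_{\ttb_1}\hcalA_{<0}\cap\TT_{\ttb_2}\hcalA_{<0}=\TT_{\ttb_1\wedge\ttb_2}\hcalA_{<0}$ for $\ttb_1,\ttb_2\in\ttB$: intersecting it with $\hcalA_{\ge0}$ (for $\ttb_1=\ttb$, $\ttb_2=\sigma_i$) gives $\hcalA(\ttb)\cap\hcalA(\sigma_i)=\hcalA_{<0}\cap\hcalA_{\ge0}=\Q(q^{1/2})$, while $\hcalA(\sigma_i)=\Q(q^{1/2})[f_{i,0}]$ by Lemma~\ref{lem:int}(iii), whence $f_{i,0}\notin\hcalA(\ttb)$. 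In the intersection formula the inclusion $\supseteq$ is immediate from Lemma~\ref{lem:int}(ii) together with Proposition~\ref{prop: gcd}.

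To prove the reverse inclusion $\TT_{\ttb_1}\hcalA_{<0}\cap\TT_{\ttb_2}\hcalA_{<0}\subseteq\TT_{\ttb_1\wedge\ttb_2}\hcalA_{<0}$ I would bring in the non‑degenerate symmetric form $\hAform{\ ,\ }$, the adjoint pairs $(f_{i,m},\rmE_{i,m})$ and $(\Es_{i,m},f_{i,m})$ of Theorem~\ref{thm: hAform}\,(iii), and the PBW bases of Lemma~\ref{lem:int}(iii). The idea is to give an intrinsic description of $\TT_\ttc\hcalA_{<0}$ — in terms of the pairing $\hAform{\ ,\ }$ against $\TT_\ttc\hcalA_{\ge1}$ and of the operators $\Es_{i,m}$, $\rmE_{i,m}$ — check that this description is stable under intersection and matches infima of braids, and then compare the PBW bases on the two sides; for the present application only the case $\ttb_2=\sigma_i$ is needed, which should come down to a PBW computation inside $\hcalA_{\ge0}$ controlled by $\Es_{i,0}$ and $\hAform{\ ,\ }$.

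The main obstacle is exactly this last point: showing that $\ttb\mapsto\TT_\ttb\hcalA_{<0}$ \emph{reflects} the divisibility order on $\ttB$ — turning $\wedge$ into $\cap$ — and not merely that it is order‑preserving. Everything else (the reduction via Lemma~\ref{lem: loc red braid} and the induction on $m$) is formal once this is in hand.
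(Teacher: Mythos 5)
Your reduction of faithfulness to the single intersection identity $\TT_{\ttb_1}\hcalA_{<0}\cap\TT_{\ttb_2}\hcalA_{<0}=\TT_{\ttb_1\wedge\ttb_2}\hcalA_{<0}$ is sound, and the outer layer of your argument is correct as written: the passage via Lemma~\ref{lem: loc red braid} to the statement that $\TT_\tty=\TT_{\Updelta^m}$ forces $\tty=\Updelta^m$ for $\tty\in\ttB^+$, the base case via Lemma~\ref{lem:int}, the deduction of $\Updelta\ledot\tty$ from $\sigma_i\ledot\tty$ for all $i$, and the contrapositive derivation of your Key Lemma from the intersection identity are all fine. This outer route is genuinely a little different from the paper's, which never invokes Lemma~\ref{lem: loc red braid} or the lcm characterization of $\Updelta$; it instead observes that $\ttb\wedge1\in\ttB^-$ and applies the intersection identity once to conclude $\ttb\in\ttB^+$, after which Lemma~\ref{lem:int} finishes. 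Both reductions are equally formal, and neither buys much over the other.

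The genuine gap is that the intersection identity itself --- which is exactly Proposition~\ref{prop: the prop} of the paper --- is never proved. You correctly flag it as ``the main obstacle,'' but what you offer in its place (an intrinsic description of $\TT_\ttc\hcalA_{<0}$ via the pairing and the operators $\rmE_{i,m}$, $\Es_{i,m}$, to be checked ``stable under intersection'' and compared with PBW bases) is a statement of intent, not an argument, and this is where essentially all of the analytic content of the theorem lives. In the paper it requires an induction on the smallest $m$ with $\ttb_1,\ttb_2\ledot\Updelta^m$, the auxiliary identity $\TT_\ttb\hcalA_{<0}\cap\hcalA_{<m-1}=\TT_{\Updelta^{m-1}\wedge\ttb}\hcalA_{<0}$ of \eqref{eq: claim}, and the four-part Lemma~\ref{lem: the lemma}, whose key points are: (i) $\rmE_{i,m}$ annihilates $\TT_\ttb\hcalA_{<0}$ whenever $\sigma_i\ttb\ledot\Updelta^{m+1}$, a consequence of \eqref{eq:br}; (ii) a Garside-combinatorial step guaranteeing that if $\ttb_1\wedge\ttb_2=1$ then for each $i$ at least one of $\sigma_i\ttb_1,\sigma_i\ttb_2$ stays below $\Updelta^m$; and (iii) the non-degeneracy of $\hAform{\ ,\ }$, which identifies $\hcalA_{\le m-1}$ inside $\hcalA_{\le m}$ as the common kernel of the $\rmE_{i,m-1}$ and hence forces $\TT_{\ttb_1}\hcalA_{<0}\cap\TT_{\ttb_2}\hcalA_{<0}\subset\hcalA_{<m-1}$. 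Note also that it is not clear the special case $\ttb_2=\sigma_i$ you single out is any easier: the induction that establishes it passes through intersections of general pairs below $\Updelta^{m-1}$, so one cannot avoid proving the identity in full generality. Until this proposition is supplied, the proposal is a correct skeleton missing its load-bearing bone.
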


Recall the endomorphisms $\rmE_{i,m}$ and $\Es_{i,m}$ in~\eqref{eq: E endo}, and the bilinear form $\hAform{ \ , \ }$ in~\eqref{eq: A form}.  

\begin{lemma} \label{lem: the lemma}
  Let $\ttb,\ttb_1,\ttb_2 \in \ttB^+$ and  $m\in\Z_{\ge0}$.
\bna
\item \label{it: (a)} If $i\in I$
  satisfies $\sigma_i \ttb \ledot \Updelta^{m+1}$, then we have $\rmE_{i,m}\TT_\ttb \hcalA_{<0} =0$.  
\item \label{it: (b)} If $\ttb \ledot \Updelta^{m}$ and $\sigma_i \not\ledot \ttb$, then $\sigma_i\ttb \ledot \Updelta^m$. 
\item \label{it: (c)} $\{ x \in \hcalA_{\le m} \ | \  \hAform{x,\sum_{i\in I} f_{i,m}\hcalA_{\le m}} =  0\} = \hcalA_{\le m-1}$. 
\item \label{it: (d)} If $m\ge 1$,  $\ttb_1,\ttb_2 \ledot \Updelta^m$ and $\ttb_1 \wedge \ttb_2 =1$, then $\TT_{\ttb_1}\hcalA_{<0} \cap \TT_{\ttb_2}\hcalA_{<0}  \subset \hcalA_{< m-1}$. 
\ee
\end{lemma}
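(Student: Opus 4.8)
I would prove (a), then (b) and (c), and obtain (d) from (a)--(c); essentially all the content is in (a), with (b) a Garside-theoretic fact and (c) a computation with $\hAform{\ ,\ }$. \emph{For (a):} each $\TT_\ttb$ is a $\kk$-algebra automorphism with $\wt(\TT_\ttb x)=\pi(\ttb)\,\wt(x)$, and $(\ ,\ )$ is $\sfW$-invariant, so $\TT_\ttb$ intertwines the $q$-bracket, $\TT_\ttb([u,v]_q)=[\TT_\ttb u,\TT_\ttb v]_q$; hence for homogeneous $x\in\hcalA_{<0}$ we have $\rmE_{i,m}(\TT_\ttb x)=[\TT_\ttb x,f_{i,m+1}]_q=\TT_\ttb\bigl([x,\TT_\ttb^{-1}(f_{i,m+1})]_q\bigr)$, and by \eqref{eq:br} it is enough to check that $\TT_\ttb^{-1}(f_{i,m+1})\in\hcalA_{>0}$. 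By hypothesis pick $c\in\ttB^{+}$ with $\sigma_i\ttb c=\Updelta^{m+1}$; then $\TT_\ttb^{-1}=\TT_c\,\TT_\Updelta^{-(m+1)}\,\TT_i$, and using $\TT_i(f_{i,m+1})=f_{i,m+2}$ together with the iterated form $\TT_\Updelta^{-1}(f_{j,p})=f_{j^*,p-1}$ of Lemma~\ref{lem: shift} we get $\TT_\ttb^{-1}(f_{i,m+1})=\TT_c(f_{i',1})$ for some $i'\in I$, where $f_{i',1}\in\hcalA[1]\subset\hcalA_{\ge1}$. Finally $\TT_c\hcalA_{\ge1}\subset\hcalA_{\ge1}$: writing $c\Updelta=\Updelta c'$ with $c'\seteq\Updelta^{-1}c\Updelta\in\ttB^{+}$, \eqref{eq:Delta} and Lemma~\ref{lem:int} give $\TT_c\hcalA_{\ge1}=\TT_c\TT_\Updelta\hcalA_{\ge0}=\TT_\Updelta\TT_{c'}\hcalA_{\ge0}\subset\TT_\Updelta\hcalA_{\ge0}=\hcalA_{\ge1}$. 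Hence $\TT_\ttb^{-1}(f_{i,m+1})\in\hcalA_{\ge1}=\hcalA_{>0}$, so $\rmE_{i,m}(\TT_\ttb x)=0$.

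\emph{For (b):} we may assume $m\ge1$; and if $\ttb=1$ then $\sigma_i\ttb=\sigma_i\ledot\Updelta\ledot\Updelta^m$, so assume also $\ttb\ne1$. Let $\ttb=\ttx_1\cdots\ttx_k$ be the left normal form (Theorem~\ref{thm: Garside}): the $\ttx_s\ledot\Updelta$ are simple, $\ttx_1=\ttb\wedge\Updelta$, and $k\le m$ since $\ttb\ledot\Updelta^m$. As $\ttx_1\ledot\ttb$, the hypothesis $\sigma_i\not\ledot\ttb$ forces $\sigma_i\not\ledot\ttx_1$; under the length-preserving bijection between simple elements of $\ttB^{+}$ and $\sfW$ this says $\ell(s_i\,\pi(\ttx_1))=\ell(\pi(\ttx_1))+1$, so $\sigma_i\ttx_1$ is again simple, $\sigma_i\ttx_1\ledot\Updelta$; therefore $\sigma_i\ttb=(\sigma_i\ttx_1)\ttx_2\cdots\ttx_k$ is a product of $k\le m$ simple elements, whence $\sigma_i\ttb\ledot\Updelta^{m}$. \emph{For (c):} by the symmetry of $\hAform{\ ,\ }$ and the adjunction $\hAform{f_{i,m}z,x}=\hAform{z,\rmE_{i,m}(x)}$ for $x,z\in\hcalA_{\le m}$ (Theorem~\ref{thm: hAform}), together with the non-degeneracy of $\hAform{\ ,\ }$ restricted to $\hcalA_{\le m}$ and the fact that $\rmE_{i,m}$ maps $\hcalA_{\le m}$ into itself, an $x\in\hcalA_{\le m}$ lies in the left-hand set iff $\rmE_{i,m}(x)=0$ for all $i$; so (c) reduces to $\{x\in\hcalA_{\le m}\mid\rmE_{i,m}(x)=0\ \forall i\}=\hcalA_{\le m-1}$. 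The inclusion $\supseteq$ is \eqref{eq:br}. For $\subseteq$, using the identity $\rmE_{i,m}(ab)=q^{-(\wt b,\,\wt f_{i,m+1})}\rmE_{i,m}(a)b$ ($a\in\hcalA[m]$, $b\in\hcalA_{\le m-1}$, from \eqref{eq:br}) and writing $x=\sum_j a_j b_j$ via the isomorphism $\hcalA[m]\otimes\hcalA_{\le m-1}\isoto\hcalA_{\le m}$ of \eqref{eq: serial} with $b_j$ linearly independent, $\rmE_{i,m}(x)=0$ forces $\rmE_{i,m}(a_j)=0$ for all $i,j$; since under $\hcalA[m]\simeq\calU_q^-(\g)$ the $\rmE_{i,m}$ are nonzero rescalings of the standard skew-derivations on $\calU_q^-(\g)$ (with $\rmE_{i,m}(f_{j,m})=\delta_{ij}(1-q_i^2)$), whose common kernel is $\kk$, each $a_j$ is a scalar, i.e.\ $x\in\hcalA_{\le m-1}$.

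\emph{For (d):} let $x\in\TT_{\ttb_1}\hcalA_{<0}\cap\TT_{\ttb_2}\hcalA_{<0}$. From $\ttb_j\ledot\Updelta^m$, Lemma~\ref{lem:int} and \eqref{eq:Delta} give $\TT_{\ttb_j}\hcalA_{<0}\subset\hcalA_{<m}=\hcalA_{\le m-1}$, so $x\in\hcalA_{\le m-1}$; by (c) applied with $m-1$ in place of $m$ it suffices to show $\rmE_{i,m-1}(x)=0$ for all $i$. Fix $i$: since $\sigma_i\ne1=\ttb_1\wedge\ttb_2$, at least one of $\sigma_i\not\ledot\ttb_1$, $\sigma_i\not\ledot\ttb_2$ holds, say $\sigma_i\not\ledot\ttb_1$; then (b) gives $\sigma_i\ttb_1\ledot\Updelta^{m}=\Updelta^{(m-1)+1}$, and (a) (with $m-1$ for $m$) gives $\rmE_{i,m-1}\TT_{\ttb_1}\hcalA_{<0}=0$, hence $\rmE_{i,m-1}(x)=0$. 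So $\rmE_{i,m-1}(x)=0$ for all $i$ and $x\in\hcalA_{<m-1}$. The main obstacle is (a): its hypothesis is a relation in the braid monoid, and the point is to convert it — via Lemma~\ref{lem: shift} and the symmetry $\Updelta^{-1}\ttB^{+}\Updelta=\ttB^{+}$ — into the statement $\TT_\ttb^{-1}(f_{i,m+1})\in\hcalA_{>0}$, after which \eqref{eq:br} finishes it; once (a) and (b) are in hand, (c) and (d) are short.
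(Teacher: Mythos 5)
Your proof is correct and its overall architecture — (a) via \eqref{eq:br}, (b) via Garside combinatorics, (c) via the bilinear form, then (d) assembled from (a)--(c) exactly as you describe — coincides with the paper's; parts (b) and (d) in particular match the paper's argument. The genuine differences are in how you execute (a) and (c). For (a), the paper argues in the ``forward'' direction in one line: $\sigma_i\ttb\ledot\Updelta^{m+1}$ gives $\TT_{\sigma_i\ttb}\hcalA_{<0}\subset\hcalA_{<m+1}$ by Lemma~\ref{lem:int} and \eqref{eq:Delta}, while $\TT_if_{i,m+1}=f_{i,m+2}\in\hcalA_{>m+1}$, so $0=[\TT_{\sigma_i\ttb}\hcalA_{<0},\TT_if_{i,m+1}]_q=\TT_i\bl\rmE_{i,m}(\TT_\ttb\hcalA_{<0})\br$ by \eqref{eq:br}. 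You instead pull $f_{i,m+1}$ backwards through $\TT_\ttb^{-1}$ and compute it explicitly as $\TT_c(f_{i',1})\in\hcalA_{\ge1}$; this is valid but longer and needs the extra (standard, but unstated in the paper) fact that $\Updelta$ normalizes $\ttB^+$, which the forward argument avoids. For (c), the paper observes directly that $\hcalA_{\le m}=S\oplus\hcalA_{\le m-1}$ with $S\seteq\sum_{i\in I}f_{i,m}\hcalA_{\le m}$ and $\hAform{S,\hcalA_{\le m-1}}=0$, and concludes from non-degeneracy of the form on $\hcalA_{\le m}$; your dual route through the adjoints $\rmE_{i,m}$ is equivalent but imports the additional input that the common kernel of the skew-derivations on $\calU_q^-(\g)$ is the scalars. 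Both detours are legitimate and all steps check out; the paper's versions are simply shorter and stay entirely within its stated toolkit.
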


\begin{proof}
\eqref{it: (a)} Note that $\TT_{\sigma_i \ttb} \hcalA_{<0} \subset \hcalA_{\le m}$ and $\TT_i f_{i,m+1} = f_{i,m+2} $.
Hence \eqref{eq:br} implies
$$
0 = [ \TT_{\sigma_i \ttb} \hcalA_{<0},  \TT_i f_{i,m+1} ]_q =\TT_i  [ \TT_{\ttb} \hcalA_{<0},   f_{i,m+1} ]_q = 
\TT_i(\rmE_{i,m}(\TT_{\ttb} \hcalA_{<0})),
$$
which implies the first assertion \eqref{it: (a)}. 

\noindent
\eqref{it: (b)} By applying
the anti-automorphism of $\ttB^+$ sending $\sigma_i$ to itself, we
can reduce the problem: if $\ttb \ledot \Updelta^m$ and $\ttb \not\in \ttB^+ \sigma_i$, then $\ttb \sigma_i \ledot \Updelta^m$. Write $\ttb = \ttb_1\ttb_2$ such that $\ttb_1 \seteq \Updelta^{m-1} \wedge \ttb$ and $\ttb_2 \ledot \Updelta$. Then we have $\ttb_2 \not\in \ttB^+ \sigma_i$ and hence $\ttb_2 \sigma_i \ledot \Updelta$. 

\snoi
\eqref{it: (c)} Let us set $S \seteq \sum_{i \in I} f_{i,m} \hcalA_{\le m} =\sum_{i \in I}  f_{i,m} \hcalA[m] \otimes \hcalA_{\le m-1}$. Then we have
$\hcalA_{\le m} = S \soplus \hcalA_{\le m-1}$ and 
$
\hAform{S,\hcalA_{\le m-1}}=0 
$.
Then the assertion follows from the fact that $\hAform{ \ , \ }$ on $\hcalA_{\le m}$ is non-degenerate. 

\snoi
\eqref{it: (d)}
By Lemma~\ref{lem:int} and \eqref{eq:Delta},
we have $\TT_{\ttb_k}(\hcalA_{<0})\subset\hcalA_{< m}$
for $k=1,2$.   By the assumption, for any $i \in I$, we have $\sigma_i \not\ledot \ttb_1$ or $\sigma_i \not\ledot \ttb_2$. Thus for each $i \in I$,
there exists $k_i \in \{1,2\}$  such that $\sigma_i \ttb_{k_i} \in \Updelta^m$ by~\eqref{it: (b)}. Then $\rmE_{i,m-1}( \TT_{\ttb_1}\hcalA_{<0} \cap \TT_{\ttb_2}\hcalA_{<0}) =0$ by~\eqref{it: (a)}. Then  the assertion follows from~\eqref{it: (c)} and Theorem~\ref{thm: hAform}~\eqref{it: pairing}.
\end{proof}

\begin{proposition} \label{prop: the prop}
For $\ttb_1,\ttb_2 \in \ttB$ and $m\in\Z$, we have
$$
\TT_{\ttb_1}\hcalA_{<m} \cap \TT_{\ttb_2}\hcalA_{<m} = \TT_{\ttb_1 \wedge \ttb_2}\hcalA_{<m}
\qtq
\TT_{\ttb_1}\hcalA_{ \ge m} \cap \TT_{\ttb_2}\hcalA_{\ge m} = \TT_{\ttb_1 \vee \ttb_2}\hcalA_{\ge m}.
$$
\end{proposition}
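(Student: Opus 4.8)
The two displayed identities are equivalent, so I would only prove the first one. The anti-automorphism $\star$ sends $\hcalA_{<m}$ onto $\hcalA_{\ge 1-m}$ (because $(f_{i,p})^\star=f_{i,-p}$) and, by \eqref{eq:TTsta}, conjugates $\TT_\ttb$ into $\TT_{\psi(\ttb)}$; together with \eqref{eq:psibb} and the bijectivity of $\psi$, applying $\star$ turns the identity $\TT_{\ttb_1}\hcalA_{<m}\cap\TT_{\ttb_2}\hcalA_{<m}=\TT_{\ttb_1\wedge\ttb_2}\hcalA_{<m}$ into $\TT_{\psi(\ttb_1)}\hcalA_{\ge 1-m}\cap\TT_{\psi(\ttb_2)}\hcalA_{\ge 1-m}=\TT_{\psi(\ttb_1)\vee\psi(\ttb_2)}\hcalA_{\ge 1-m}$, which is the second identity. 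In the first identity the inclusion $\supseteq$ is immediate from $\ttb_1\wedge\ttb_2\ledot\ttb_k$, the monotonicity in Lemma~\ref{lem:int}(ii), and $\hcalA_{<m}=\TT_{\Updelta^m}\hcalA_{<0}$ from \eqref{eq:Delta}. For $\subseteq$ I would normalize: left multiplication by $(\ttb_1\wedge\ttb_2)^{-1}$ is an automorphism of the lattice $(\ttB,\ledot)$ and $\TT_{(\ttb_1\wedge\ttb_2)^{-1}}$ is an algebra automorphism, so we may assume $\ttb_1\wedge\ttb_2=1$, which forces $\ttb_1,\ttb_2\in\ttB^+$. Thus everything reduces to proving: for $\ttb_1,\ttb_2\in\ttB^+$ with $\ttb_1\wedge\ttb_2=1$ one has $\TT_{\ttb_1}\hcalA_{<m}\cap\TT_{\ttb_2}\hcalA_{<m}\subseteq\hcalA_{<m}$ for all $m\in\Z$.

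I would prove this by induction on $M\seteq\max\bigl(\ell_\Updelta(\ttb_1),\ell_\Updelta(\ttb_2)\bigr)$, where $\ell_\Updelta(\ttb)$ is the least $N\ge0$ with $\ttb\ledot\Updelta^N$ (finite on $\ttB^+$ by Theorem~\ref{thm: Garside}). Since conjugation by $\Updelta^m$ preserves $\ttB^+$, meets and $\ell_\Updelta$, and carries $\hcalA_{<0}$ to $\hcalA_{<m}$ by \eqref{eq:Delta}, we may assume $m=0$ in the inductive step while still invoking the inductive hypothesis at any value of the index. For $M=0$ the statement is trivial and for $M=1$ (i.e.\ $\ttb_1,\ttb_2\ledot\Updelta$) it is exactly Lemma~\ref{lem: the lemma}(d) with $m=1$. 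Suppose $M\ge2$; discarding the case where some $\ttb_k=1$ (then the claim is immediate), write $\ttb_k=\ttb_k'\ttr_k$ with $\ttr_k\ledot\Updelta$ the last Garside factor of $\ttb_k$, so that $\ell_\Updelta(\ttb_k')\le M-1$ and $\ttb_1'\wedge\ttb_2'\ledot\ttb_1\wedge\ttb_2=1$. Since $\TT_{\ttr_k}\hcalA_{<0}\subseteq\TT_\Updelta\hcalA_{<0}=\hcalA_{<1}$ by Lemma~\ref{lem:int}(ii) and \eqref{eq:Delta}, the inductive hypothesis at index $1$ gives the \emph{overshoot}
\[
\TT_{\ttb_1}\hcalA_{<0}\cap\TT_{\ttb_2}\hcalA_{<0}\ \subseteq\ \TT_{\ttb_1'}\hcalA_{<1}\cap\TT_{\ttb_2'}\hcalA_{<1}\ =\ \hcalA_{<1}.
\]

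Next set $\ttq_k\seteq\ttb_k\wedge\Updelta$; then $\ttq_k^{-1}\Updelta\ledot\Updelta$, $\ell_\Updelta(\ttq_k^{-1}\ttb_k)\le M-1$, and $(\ttq_k^{-1}\Updelta)\wedge(\ttq_k^{-1}\ttb_k)=\ttq_k^{-1}(\Updelta\wedge\ttb_k)=1$, so left multiplication by $\ttq_k^{-1}$ and the inductive hypothesis yield the \emph{cut‑back}
\[
\hcalA_{<1}\cap\TT_{\ttb_k}\hcalA_{<0}=\TT_{\ttq_k}\Bigl(\TT_{\ttq_k^{-1}\Updelta}\hcalA_{<0}\cap\TT_{\ttq_k^{-1}\ttb_k}\hcalA_{<0}\Bigr)=\TT_{\ttq_k}\hcalA_{<0}.
\]
Since $\TT_{\ttb_1}\hcalA_{<0}\cap\TT_{\ttb_2}\hcalA_{<0}$ lies in $\hcalA_{<1}$ and in each $\TT_{\ttb_k}\hcalA_{<0}$, it lies in $\hcalA_{<1}\cap\TT_{\ttb_k}\hcalA_{<0}=\TT_{\ttq_k}\hcalA_{<0}$ for $k=1,2$, hence in $\TT_{\ttq_1}\hcalA_{<0}\cap\TT_{\ttq_2}\hcalA_{<0}$. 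Finally $\ttq_1,\ttq_2\ledot\Updelta$ and $\ttq_1\wedge\ttq_2=(\ttb_1\wedge\ttb_2)\wedge\Updelta=1$, so Lemma~\ref{lem: the lemma}(d) (again with $m=1$) gives $\TT_{\ttq_1}\hcalA_{<0}\cap\TT_{\ttq_2}\hcalA_{<0}\subseteq\hcalA_{<0}$, completing the induction. The general statement follows by undoing the normalizations: for arbitrary $\ttb_1,\ttb_2\in\ttB$ apply the proven inclusion to $\bigl((\ttb_1\wedge\ttb_2)^{-1}\ttb_1,(\ttb_1\wedge\ttb_2)^{-1}\ttb_2\bigr)$ and translate back by $\TT_{\ttb_1\wedge\ttb_2}$, and pass to the $\vee$–version via $\star$ as above.

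The main obstacle is that Lemma~\ref{lem: the lemma}(d) on its own only bounds the intersection by $\hcalA_{<m-1}$ when $\ttb_1,\ttb_2\ledot\Updelta^m$, a bound that is sharp only for simple elements; the heart of the argument is the two‑step maneuver above — first overshooting into $\hcalA_{<1}$ by peeling off the last Garside factors, then cutting back to $\TT_{\ttb_k\wedge\Updelta}\hcalA_{<0}$ by reintersecting with $\TT_{\ttb_k}\hcalA_{<0}$ — which reduces the problem to the simple‑element case. The delicate point to get right is the Garside bookkeeping: one must verify that each auxiliary pair appearing in the reductions (namely $(\ttb_1',\ttb_2')$ and the pairs $(\ttq_k^{-1}\Updelta,\ttq_k^{-1}\ttb_k)$) again has meet $1$ and strictly smaller $\ell_\Updelta$, so that the inductive hypothesis genuinely applies; this rests on Proposition~\ref{prop: gcd}, the description of the Garside normal form following Theorem~\ref{thm: Garside}, and the standard fact that conjugation by $\Updelta$ is a length‑preserving automorphism of $\ttB^+$.
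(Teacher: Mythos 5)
Your proof is correct, and it reaches the result by a noticeably different organization of the same ingredients. Both arguments reduce to positive braids and to $m=0$, induct on the least $N$ with $\ttb_1,\ttb_2\ledot\Updelta^N$, and hinge on Lemma~\ref{lem: the lemma}~(d) together with the lattice structure of $(\ttB,\ledot)$. The paper, however, applies Lemma~\ref{lem: the lemma}~(d) at the \emph{top} level (for $\tty_1,\tty_2\ledot\Updelta^m$ with trivial meet it places the intersection inside $\hcalA_{<m-1}$) and then cuts back using the auxiliary identity~\eqref{eq: claim}, $\TT_\ttb\hcalA_{<0}\cap\hcalA_{<m-1}=\TT_{\Updelta^{m-1}\wedge\ttb}\hcalA_{<0}$, which removes the final Garside factor and is established by its own sub-induction. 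You instead remove the final Garside factors first (the ``overshoot'' into $\hcalA_{<1}$, via the inductive hypothesis at index $1$), then cut back to the initial factors $\ttq_k=\ttb_k\wedge\Updelta$ by a second application of the inductive hypothesis to the pairs $(\ttq_k^{-1}\Updelta,\ttq_k^{-1}\ttb_k)$, and invoke Lemma~\ref{lem: the lemma}~(d) only for the resulting pair of simple elements, i.e.\ only in the case $m=1$. Your cut-back identity $\hcalA_{<1}\cap\TT_{\ttb_k}\hcalA_{<0}=\TT_{\ttb_k\wedge\Updelta}\hcalA_{<0}$ is a mirror image of~\eqref{eq: claim}; deriving it directly from the inductive hypothesis rather than by a separate sub-induction is a small economy, at the cost of having to carry the index $m$ through the induction and justify the conjugation-by-$\Updelta^m$ reduction (the paper's induction stays at index $0$ throughout). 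The Garside bookkeeping you flag --- trivial meets of the truncated pairs, the drop in the $\Updelta$-length when the last normal-form factor is removed, $\ttq_k^{-1}\Updelta\ledot\Updelta$, and compatibility of $\ledot$ with left multiplication and with conjugation by $\Updelta$ --- is standard and correctly handled; the only degenerate cases are $\ttb_k=1$ (which you dispose of) and $\ttb_k$ a positive power of $\Updelta$, which is incompatible with $\ttb_1\wedge\ttb_2=1$ unless the other braid is trivial.
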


\begin{proof}
  By Theorem~\ref{thm: Garside}, \eqref{eq:psibb} and \eqref{eq:TTsta},
  it is enough to show that
  $$\TT_{\ttb_1}\hcalA_{<0} \cap \TT_{\ttb_2}\hcalA_{<0} = \TT_{\ttb_1 \wedge \ttb_2}\hcalA_{<0}\quad\text{for any $\ttb_1,\ttb_2 \in \ttB^+$.}
  $$
Let us show it by induction on $m \ge 0$    
such that $\ttb_1,\ttb_2 \ledot \Updelta^m$. The case $m=0$ is trivial.
Thus let us assume further that $m \ge 1$.

We first claim that 
\begin{align}\label{eq: claim}
\TT_\ttb \hcalA_{<0}  \cap \hcalA_{<m-1}     = \TT_{\Updelta^{m-1} \wedge \ttb } \hcalA_{<0}\qt{for any $m\in\Z_{\ge0}$ and $\ttb\in\ttB^+$ such that $\ttb \ledot \Updelta^m$.}
\end{align}
Since \eqref{eq: claim} is trivial for $m\le1$, we may assume that $m\ge2$.
Let us write $\ttb = \ttb_{(1)}\ttb_{(2)}$ such that $1\ledot\ttb_{(1)} = \Updelta^{m-1} \wedge \ttb$ and $1\ledot\ttb_{(2)} \ledot \Updelta$. 
Then we have
\begin{align*}
\TT_{\ttb} \hcalA_{<0}   \cap \hcalA_{<m-1} & = \TT_{\ttb} \hcalA_{<0} \cap \TT_{\Updelta^{m-1}} \hcalA_{<0}   \\
& = \TT_{\ttb_{(1)}}(\TT_{\ttb_{(2)}} \hcalA_{<0} \cap \TT_{\ttb_{(1)}^{-1} \Updelta^{m-1}} \hcalA_{<0} ) \\
& \underset{*}{=} \TT_{\ttb_{(1)}}(\TT_{\ttb_{(2)} \wedge \ttb_{(1)}^{-1} \Updelta^{m-1}} \hcalA_{<0} ) \\
& = \TT_{\ttb \wedge \Updelta^{m-1}} \hcalA_{<0}.
\end{align*}
Here $\underset{*}{=} $ holds by the induction hypothesis, since $\ttb_{(2)},\ttb_{(1)}^{-1}\Updelta^{m-1} \ledot \Updelta^{m-1}$.
Hence~\eqref{eq: claim} holds. 

\smallskip
Now let us write $\ttb_k = \ttx \tty_k$ $(k=1,2)$ such that $\ttx = \ttb_1 \wedge \ttb_2$ and $\tty_1 \wedge \tty_2 =1$. Note that $\tty_k \ledot \Updelta^m$. Then we have
$$
\TT_{\ttb_1}\hcalA_{<0} \cap \TT_{\ttb_2}\hcalA_{<0} = \TT_\ttx (\TT_{\tty_1}\hcalA_{<0} \cap \TT_{\tty_2}\hcalA_{<0}).
$$

  Then,  we have 
\begin{align*}
\TT_{\tty_1}\hcalA_{<0} \cap \TT_{\tty_2}\hcalA_{<0} & = \TT_{\tty_1}\hcalA_{<0} \cap \TT_{\tty_2}\hcalA_{<0} \cap \hcalA_{<m-1} 
\ \ \text{by Lemma~\ref{lem: the lemma}~\eqref{it: (d)} }\\
& = (\TT_{\tty_1}\hcalA_{<0} \cap \hcalA_{<m-1} )\cap (\TT_{\tty_2}\hcalA_{<0} \cap \hcalA_{<m-1}) \\
& =  \TT_{\Updelta^{m-1} \wedge \tty_1}\hcalA_{<0} \cap \TT_{\Updelta^{m-1} \wedge\tty_2}\hcalA_{<0}  \ \ \text{by \eqref{eq: claim}}  \\
& =  \TT_{\Updelta^{m-1} \wedge \tty_1 \wedge \Updelta^{m-1} \wedge\tty_2}\hcalA_{<0} = \hcalA_{<0} \ \ \ \text{by the induction on $m$}.
\end{align*}
Hence 
\[
\TT_{\ttb_1}\hcalA_{<0} \cap \TT_{\ttb_2}\hcalA_{<0} = \TT_{\ttx}\hcalA_{<0}. \qedhere
\]
\end{proof}

\begin{proof} [Proof of Theorem~\ref{thm: main}]

  It is enough to show that, if $\ttb\in\ttB$ satisfies
  $\TT_\ttb \hcalA_{<0} = \hcalA_{<0}$ then $\ttb=1$.

  \snoi
  (a)\ If $\ttb\in\ttB^+$ satisfies $\TT_\ttb \hcalA_{<0} = \hcalA_{<0}$,
  then $\TT_\ttb \hcalA_{<0}\cap\hcalA_{\ge0}=\Q(q^{1/2})$, and hence
  Lemma~\ref{lem:int} implies $\ttb=1$.
  Hence, if $\ttb\in\ttB^-$ satisfies $\TT_\ttb \hcalA_{<0} = \hcalA_{<0}$
  then $\ttb=1$.

  \snoi
  (b)
  If $\TT_\ttb \hcalA_{<0} = \hcalA_{<0}$, then one has
  \eqn
\hcalA_{<0}=\TT_\ttb \hcalA_{<0} \cap \hcalA_{<0} = \TT_{\ttb \wedge 1} \hcalA_{<0} 
\eneqn
by Proposition \ref{prop: the prop}.
Since  $\ttb\wedge 1 \in \ttB^-$, $\ttb\wedge1=1$ by (a).
Hence $\ttb\in\ttB^+$.
Then (a) implies $\ttb=1$.
\end{proof}

%\bibliographystyle{amsplain}
%\bibliography{ref}{}

\providecommand{\bysame}{\leavevmode\hbox to3em{\hrulefill}\thinspace}
\providecommand{\MR}{\relax\ifhmode\unskip\space\fi MR }
% \MRhref is called by the amsart/book/proc definition of \MR.
\providecommand{\MRhref}[2]{%
  \href{http://www.ams.org/mathscinet-getitem?mr=#1}{#2}
}
\providecommand{\href}[2]{#2}

\end{document}